\documentclass[12pt,a4paper]{article}
\usepackage{graphicx,amsmath,amssymb,epsfig,latexsym,lscape,comment,color,caption,subcaption,titlesec,setspace,changepage,float,amsthm,cite,rotating,bigints,multirow,mathtools,parskip,algorithm,enumitem,algorithm,dsfont,booktabs}

\usepackage{thmtools} 
\usepackage{tcolorbox}
\usepackage[mathcal]{eucal}
\usepackage[colorlinks=true, allcolors=blue]{hyperref}
\usepackage{cleveref}
\usepackage[normalem]{ulem}
\usepackage[dvipsnames]{xcolor}
\usepackage[title]{appendix}
\usepackage{tcolorbox}
\allowdisplaybreaks

\newcommand{\prox}{\ensuremath{\mathrm{Prox}}}

\newtheorem{theorem}{Theorem}[section]

\newtheorem{lemma}[theorem]{Lemma}
\newtheorem{proposition}[theorem]{Proposition}

\newcommand{\st}{\mathrm{s.t.}}
\newcommand{\R}{\mathbb{R}}
\newcommand{\RR}{\overline{\mathbb{R}}}
\newcommand{\norm}[1]{\|#1\|}

\DeclareMathOperator*{\argmin}{arg\,min}
\newcommand{\tr}{\mathrm{tr}}

\usepackage[margin = 0.78in]{geometry}
\newcommand{\vect}[1]{\boldsymbol{#1}}

\usepackage{titlesec}
\titleformat{\section}
  {\normalfont\large\bfseries}  
  {\thesection}{1em}{}
\titleformat{\subsection}
  {\normalfont\normalsize\bfseries}
  {\thesubsection}{1em}{}

\title{\sf\large Interwoven SDP in Primal-Dual Proximal Splitting Methods for Adjustable Robust Convex Optimisation with SOS-Convex Polynomial Constraints }

\author{\small N. D. Dizon\thanks{Department of Applied Mathematics, University of New South Wales, Sydney 2052, Australia. Emails: \url{n.dizon@unsw.edu.au}, \url{b.caldwell@unsw.edu.au},   \url{v.jeyakumar@unsw.edu.au}, \url{g.li@unsw.edu.au}. The research of the authors was supported by a grant from the Australian Research Council.}   \and \small B. I. Caldwell\footnotemark[1] \and \small V. Jeyakumar\footnotemark[1]\thanks{Corresponding author.} \and \small G. Li\footnotemark[1] }
\date{\small First version: November 25, 2025}

\begin{document}
\onehalfspacing
\maketitle
\begin{abstract}
We propose a novel methodology for solving a two-stage adjustable robust convex optimisation problem with a general (proximable) convex objective function and constraints defined by sum-of-squares (SOS) convex polynomials. These problems appear in many decision-making applications. However, they are challenging to solve and typically cannot be reformulated as numerically tractable convex optimisation models, such as conic linear programs, that can be solved directly using existing software. We show that the robust problem admits an equivalent representation as a convex composite unconstrained optimisation model that preserves the same objective values, under quadratic decision rules on the adjustable decision variables. Building on this reformulation, we develop a tailored first-order primal-dual proximal splitting method.  By leveraging semidefinite programming (SDP) techniques as well as tools from convex analysis and real algebraic geometry, we establish its theoretical properties, including computable SDP-based formulas for projections onto closed convex sets, specified by SOS-convex polynomial inequalities. Numerical experiments on a two-stage lot-sizing model with both linear as well as SOS-convex polynomial storage costs under demand uncertainty demonstrate the effectiveness and applicability of the proposed approach. Our approach enables the incorporation of
SDP techniques into a primal-dual proximal splitting framework, thereby broadening the class of problems to which these methods can be effectively applied.
\end{abstract}

\noindent {\bfseries Keywords:} Robust convex optimisation; first-order methods; semidefinite programs; sum-of-squares convexity.

\section{Introduction}
Robust optimisation (RO) has emerged as one of the leading modelling frameworks for decision-making under data uncertainty \cite{ben2009robust}. In its classical (static) form, RO seeks solutions that remain feasible for all possible realisations of uncertain parameters within a prescribed uncertainty set. All decision variables in static RO are ``here-and-now” decisions: their values must be determined before the uncertain parameters are revealed \cite{ben2004adjustable}. This approach provides an effective safeguard against uncertainty.

Research in the late 1990s established that RO offers a computationally tractable methodology for many classes of optimisation problems under uncertainty \cite{el1998robust,nemirovski1998robust,ben2009robust}. Since then, the framework has evolved into a widely adopted tool in practice, thanks to its balance between modelling flexibility and numerical tractability. In particular, RO has influenced areas such as machine learning \cite{dunbar2010simultaneous,jeyakumar2014support}, portfolio management \cite{goldfarb2003robust}, and supply chain and logistics \cite{ben2009robust}. We refer the reader to \cite{bertsimas2011theory} for a detailed overview of RO theory and applications.

Recent advances have extended static RO models to practical applications in healthcare and medical decision-making, such as radiation therapy planning \cite{fredriksson2017robust}, disease characterisations \cite{dunbar2010simultaneous, cysique2010screening, woolnough2022robust}. These models are designed to guard against all possible realisations of patient-treatment-related uncertainties. Notably, a static RO framework with evolving, time-dependent uncertainty sets was proposed in \cite{nohadani2017robust} and successfully demonstrated its effectiveness in prostate cancer treatment planning. A broader discussion of robust treatment planning is given in \cite{fredriksson2017robust}.

However, static RO models can lead to overly conservative decisions \cite{roos2020reducing}. For instance, robust radiation therapy plans may require excessively high doses, making treatments more costly and less practical. Adjustable robust optimisation (ARO) was introduced as an extension of RO for multi-stage decision-making problems \cite{delage2015robust,ben2004adjustable} that can reduce conservatism. In ARO, some decisions remain``here-and-now”, while others are modelled as ``wait-and-see” decisions, determined after partial uncertainty is resolved. ARO guarantees that its worst-case objective value is no worse than that of static RO \cite{marandi2018static,ben2009robust}. 

Yet, its main challenge lies in the optimisation of decision rules that arise from allowing the ``wait-and-see" decisions to adapt to uncertainty. These rules are mappings rather than fixed vectors. Optimising over mappings is theoretically and computationally demanding. To address this, tractable subclasses such as affine or quadratic decision rules have been proposed, enabling numerically tractable reformulations for many classes of problems \cite{chuong2021exact,jeyakumar2024affinely,jeyakumar2021quadratically,woolnough2021exact}. The ARO framework has recently been applied to radiotherapy planning in  \cite{jeyakumar2024affinely,ten2022adjustable}. A survey provides an overview of ARO and its applications \cite{yanikouglu2019survey}.
 
The primary computational approach in RO and ARO is to reformulate a given uncertain problem into its robust counterpart, considering the problem's underlying structures of both objective and constraint functions. Under appropriate conditions, robust counterparts often result in conic convex programs \cite{ben2009robust,jeyakumar2015robust,jeyakumar2021quadratically,jeyakumar2024affinely} that are solvable using standard optimisation software. Nevertheless, this approach limits the problem classes to those that admit such reformulations, and, in practice, the reformulations suffer from scalability compared to their nominal counterparts. For example, the robust counterpart of a linear program with ellipsoidal uncertainty becomes a convex quadratic program \cite{ben2009robust}, which is less scalable in large-scale contexts, such as in machine learning.

To address scalability, recent research has turned to first-order methods \cite{beck2017first} such as projection-type methods \cite{bauschke2020dykstra,aragon2018new,svaiter2011weak}, which exploit subgradient or proximal operator information to efficiently solve large-scale problems. These methods avoid the computational burden of solving a full optimisation problem at every iteration, relying instead on inexpensive first-order oracles. Such efficiency is particularly valuable in areas like machine learning \cite{woolnough2022robust} and supply chains \cite{woolnough2021exact,jeyakumar2021quadratically}, where RO is increasingly applied to large and complex models. 

Motivated by these developments, this paper presents a primal-dual proximal splitting (PDPS) method for solving the following ARO problem:
\begin{align}\label{problem:ro-adjustable-intro}\tag{ARP}
    \min_{\vect{x} \in \mathbb{R}^d,\, \vect{y}(\cdot)}
        & 
        \ f(\vect{x}) \\
        \text{s.t.} \
        & \vect{a}_i(\vect{w})^\top \vect{x} + \vect{c}_i^\top \vect{y}(\vect{w}) \le b_i(\vect{w}), && \forall \vect{w} \in \mathcal{B}, \ i = 1,\ldots,m, \notag
        \\
        &
        \vect{x}\in \mathcal{C}, \; \;g_j(\vect{x}) \le 0, && j=1,\ldots,s, \notag
\end{align} 
where $f:\R^d \to \ ]\!-\!\infty, +\infty]$ is a proper, convex, lower semicontinuous, proximable function in the sense that its proximal operator is well-defined and efficiently computable, the set $\mathcal{C}\subset\mathbb{R}^d$ is a closed convex set for which the projection onto $\mathcal{C}$ is efficiently computable. We refer the reader to \cite{BauCom2017,beck2017first} for examples of efficiently computable proximal and projection operators. The functions $g_j:\mathbb{R}^d\to \R, \; j=1, \ldots, s$, are \textit{SOS-convex polynomials}.

The robust inequality is an affinely parameterised system, where  $\vect{a}_i(\vect{w}) = \vect{a}_i^{(0)} + \sum_{\ell=1}^k w^{(\ell)}\vect{a}_i^{(\ell)}$, $i=1,\ldots,m$, $\vect{a}_i^{(\ell)} \in \R^d$, $\ell=0,\ldots,k$, $i=1,\ldots,m$, $b_i(\vect{w}) = b_i^{(0)} + \sum_{\ell=1}^k w^{(\ell)} b_i^{(\ell)}$, $i=1,\ldots,m$, and $b_i^{(\ell)} \in \R$, $\ell=0,\ldots,k$, $i=1,\ldots,m$. Note that, in \eqref{problem:ro-adjustable-intro}, $\vect{x}$ represents the first-stage ``here-and-now'' decision that is made before $\vect{w} \in \mathcal{B}$ is realised, $\vect{y}$ represents the ``wait-and-see'' decision that can be adjusted according to the actual data, $\vect{c}_i \in \R^q$, $i=1,\ldots,m$, and the uncertainty set $\mathcal{B}:= \{\vect{w}\in \R^k : \|\vect{w} -\vect{d}\|^2_2 \le r\}$ is a Euclidean ball, for some fixed $r>0$ and $\vect{d} \in \R^k$. The problem \eqref{problem:ro-adjustable-intro} requires optimising a convex function over mappings $\vect{y}: \mathcal{B}\to \R^q$ rather than vectors. In general, obtaining a numerically tractable reformulation of such systems is difficult unless the mapping $\vect{y}(\cdot)$ is restricted to specific classes of mappings via decision rules \cite{woolnough2021exact,jeyakumar2021quadratically,ben2009robust,chuong2021exact}. 

The \eqref{problem:ro-adjustable-intro} has constraints involving SOS-convex polynomials, which are a relatively new class of convex polynomials \cite{ahmadi2012convex,jeyakumar2015robust}, encompassing the class of affine functions, convex quadratic functions, and convex separable polynomials. In practice, constraints involving SOS-convex polynomials have appeared as convex quadratic functions in portfolio selection models, separable convex polynomials in lot-sizing problems \cite{jeyakumar2021quadratically,woolnough2021exact} (see also Section 7) or affine functions in inventory-production management problems \cite{chuong2021exact}.

The SOS-convexity is a numerically tractable relaxation of convexity for polynomials, and it allows efficient certification of nonnegativity through semidefinite programs (SDPs). Thus, SOS-convexity provides a novel framework for representing polynomial convexity, analogous to the fundamental role of the sum-of-squares property in polynomial nonnegativity. A recent extension of SOS-convexity and its applications to distributionally robust optimisation can be found in \cite{huang2024piecewise}.

\medskip

\textbf{Contributions}. The main contributions are itemised below:
\begin{enumerate}[label= \it(\roman*)]
    \item By reformulating the robust constraints into linear matrix inequalities (LMIs) with the aid of a generalised $S$-lemma, we transform the robust counterpart of \eqref{problem:ro-adjustable-intro} (under a quadratic decision rule) into a convex composite unconstrained optimisation problem, sharing the same optimal values as the original robust problems. This reformulation allows the application of a PDPS method for solving \eqref{problem:ro-adjustable-intro}.

    \item By leveraging semidefinite optimisation techniques as well as tools from convex analysis and real algebraic geometry, we present new computable SDP-based formulas for projections onto closed convex sets, specified by SOS-convex polynomial inequalities, including convex quadratic inequalities as a special case. These formulas enable the integration of SDPs within the first-order proximal splitting algorithms for solving robust convex optimisation problems.

    \item We demonstrate the practicality and computational efficiency of the proposed framework by performing numerical experiments on a lot-sizing problem with linear as well as SOS-convex storage costs under demand uncertainty, formulated as a two-stage ARO model. The results reveal that the PDPS approach with interwoven SDP-based calculations effectively handles nonlinear SOS-convex costs, maintaining tractability for moderate-scale robust lot-sizing problems. 

\end{enumerate}
\textbf{Novelty}. The originality of this work lies in deriving SDP-based formulas for computing the projection onto closed convex sets, described by the relatively recent class of SOS-convex polynomials, thereby enabling the integration of semidefinite programming techniques within a primal-dual splitting framework for robust convex optimisation. To the best of our knowledge, no previous studies have established an SDP-based formula for the computation of projections onto closed convex sets described by SOS-convex inequalities. Building on this result, we implement a first-order PDPS method for a broad class of robust convex optimisation problems, which, in general, cannot be equivalently reformulated as numerically tractable convex programs, such as SDPs, efficiently solvable by existing software.

A key feature of the proposed framework is the ability to compute proximal points through SDPs, allowing the evaluation of a broad class of projection operators. The combined use of SDPs and SOS-convexity establishes a flexible modelling framework that enhances both the numerical tractability of robust convex optimisation and the applicability of PDPS schemes.

\textbf{Organisation}. The organisation of the paper is as follows. To make the paper self-contained, Section~\ref{sec:prelims} provides key convex analysis and real algebraic geometry tools that are used throughout the paper. 
Section~\ref{sec:aro} provides corresponding reformulations for ARO models. Section~\ref{sec:projections-via-SDP} gives formulas for projection and proximal operators. Section~\ref{sec:PD} describes the PDPS method, interwoven with SDPs. Section~\ref{sec:numerics} describes how our framework is used to solve a class of lot-sizing problems in the face of demand uncertainty. Section~\ref{sec:conclusion} concludes with a discussion on future research. Appendix presents technical details related to polynomial systems.

\section{Key Convex Analysis and Algebraic Geometry  Tools}\label{sec:prelims}
In this section, to make the paper self-contained, we provide key convex analysis and basic real algebraic geometry tools that are used throughout the paper. We start by presenting notation, definitions, and fundamental results that will be used later in the paper.

Denote $\R^d$ the Euclidean space of dimension $d$, $\R^d_+$ the nonnegative orthant of $\R^d$, $\vect{a}^\top \vect{b}$ for $\vect{a}$, $\vect{b} \in \R^d$, the standard inner product on $\R^d$, and $\|\vect{a}\|_2$ the Euclidean norm of $\vect{a} \in \mathbb{R}^d$. Denote by $\RR := \,]\!-\!\infty, +\infty]$ the extended real line. Let $\vect{e}_j^{[d]}$ be the $j$\textsuperscript{th} standard basis vector in $\R^d$, and $\vect{0}$ the vector of zeros in appropriate dimensions. Denote $\mathbb{S}^d$ the space of $(d\times d)$ real symmetric matrices, and $\tr(AB)$, $A,B \in \mathbb{S}^d$, the trace product on $\mathbb S^d$. A matrix $B \in \mathbb S^d$ is positive semidefinite, denoted as $B \succeq 0$ (resp. positive definite, denoted as $B \succ 0$), if $\vect{x}^\top B \vect{x} \geq 0$  for all $\vect{x}\in \R^d$ (resp. $\vect{x}^\top B \vect{x} >0$ for all $\vect{x} \in \R^d$, $\vect{x} \neq \vect{0}$). Let $\mathbb{S}_+^d$ be the cone of symmetric $(d \times d)$ positive semidefinite matrices, and $\mathbb{S}_{++}^d$ the open cone of symmetric $(d \times d)$ positive definite matrices. The $(d\times d)$ identity matrix is denoted by $I_{d}$.  The norm induced by the trace inner product is the Frobenius norm, which we denote by $\|\cdot\|_F$.

\textbf{Proximal operators}. Let $V$ be a finite-dimensional Hilbert space. For any $\tau > 0$, the \emph{proximal operator} of $f$ is defined by
\[
    \prox_{\tau f}(\vect{v}) := \argmin_{\vect{u} \in V} \left\{ f(\vect{u}) + \frac{1}{2\tau} \|\vect{u} - \vect{v}\|_V^2 \right\}.
\]
It is known that $\prox_f$ is firmly non-expansive whenever $f$ is proper, convex, and lower semicontinuous \cite[Proposition~12.28]{BauCom2017}. Moreover, if $f = \iota_{\mathcal{C}}$ for some nonempty, closed, convex set $\mathcal{C} \subseteq V$, then $\prox_{f} = P_{\mathcal{C}}$ where $P_{\mathcal{C}}$ is the \textit{projection operator} onto $\mathcal{C}$. Here, the indicator function $\iota_{\mathcal{C}}$ is defined as $\iota_{\mathcal{C}}(\vect{v})=0$ if $\vect{v} \in {\mathcal{C}}$ and $\iota_{\mathcal{C}}(\vect{v})=+\infty$ otherwise.

\textbf{SOS polynomials}. Let $\R [\vect{x}]$ be the space of polynomials with real coefficients over $\vect{x} \in \R^d$. A polynomial $f \in \R[\vect{x}]$ is called a Sum-of-Squares (SOS) polynomial if there exist polynomials $f_j \in \R[\vect{x}]$, $j = 1,\ldots,s$, such that $f = \sum_{j=1}^s f_j^2$, for some $s \in \mathbb{N}$. For a polynomial $f \in \R[\vect{x}]$, we use ${\rm deg}\, f$ to denote its degree. We also use $\Sigma_m^2 (\vect{x})$ to denote the set of all SOS polynomials $f$ of degree at most $m$ with respect to the variable $\vect{x}\in \R^d$. Technical results related to SOS polynomials and LMIs are given in Appendix~\ref{sec:sos}. Next, we recall the definition of SOS-convex polynomials.

\textbf{SOS-convex polynomials} \cite{ahmadi2013complete,helton2010semidefinite}.  A polynomial $f \in \R[\vect{x}]$ is SOS-convex if its Hessian $H(\vect{x})$ is an SOS matrix polynomial, that is, if there exists a $(\nu \times d)$ polynomial matrix $P(\vect{x})$ for some $\nu \in \mathbb N$ such that $H(\vect{x}) = P (\vect{x})^\top P(\vect{x})$.

Several equivalent conditions for SOS-convexity can be found in \cite{ahmadi2013complete}. For instance, 
$f \in \R[\vect{x}]$ is SOS-convex whenever the polynomial $g(\vect{v}, \vect{x}) = f(\vect{v}) - f(\vect{x}) - \nabla f(\vect{x})^\top (\vect{v} - \vect{x})$ on $\R^d \times \R^d$ is an SOS polynomial with respect to the variable $(\vect{v}, \vect{x})$. 

An SOS-convex polynomial is a convex polynomial, but the converse is not true \cite{ahmadi2013complete}. In other words, the class of SOS-convex polynomials is a proper subclass of convex polynomials. While the class of SOS-convex polynomials covers affine functions, convex quadratic functions, and convex separable polynomials, they may also be non-quadratic and non-separable \cite{jeyakumar2015robust}. 

The following proposition provides the key property of SOS-convex polynomials that will be used later in the paper.

\begin{proposition}[{\bf SOS and nonnegative SOS-convex polynomials}] (See \cite{helton2010semidefinite} and \cite[Corollary~2.1]{jeyakumar2014dual}). \label{prop:nn_sos_conv}
    Let $f \in \R[\vect{x}]$ be a nonnegative SOS-convex polynomial. Then, $f$ is an SOS polynomial. 
\end{proposition}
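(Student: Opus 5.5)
The plan is to reduce nonnegativity to the SOS property of the ``first-order remainder'' polynomial noted just above the statement, evaluated at a global minimiser of $f$.

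\textbf{Step 1: $f$ attains its infimum.} Since $f$ is a convex polynomial bounded below (by $0$), it attains its infimum on $\R^d$. This is the classical result that convex polynomials bounded below attain their minimum (Belousov--Klatte). This step is the main obstacle, because in general a convex function bounded below need not attain its infimum; for instance $e^{x}$ does not. The polynomial structure is what rescues the argument.

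If I had to argue it directly, I would proceed by induction on dimension. A convex polynomial restricted to a recession direction $\vect{u}$ (one along which $t\mapsto f(\vect{x}+t\vect{u})$ is bounded above) must be constant along $\vect{u}$. This holds because a univariate convex polynomial bounded below is either constant or tends to $+\infty$ in that direction. Hence $f$ factors through the quotient by its constancy space $L$. On $L^\perp$ the restricted polynomial is coercive, so a minimiser exists there. I would cite this rather than reprove it. So let $\vect{x}^*$ satisfy $f(\vect{x}^*)=\min f\ge 0$ and $\nabla f(\vect{x}^*)=\vect{0}$.

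\textbf{Step 2: SOS-convexity gives an SOS remainder.} Next I would show that $h(\vect{v}):=f(\vect{v})-f(\vect{x}^*)-\nabla f(\vect{x}^*)^\top(\vect{v}-\vect{x}^*)$ is SOS in $\vect{v}$. Use the integral form of Taylor's theorem:
\[
h(\vect{v})=(\vect{v}-\vect{x}^*)^\top\Big(\int_0^1\!\!\int_0^t H(\vect{x}^*+s(\vect{v}-\vect{x}^*))\,ds\,dt\Big)(\vect{v}-\vect{x}^*).
\]
Write $H=P^\top P$. The inner integrand is then $\|P(\vect{x}^*+s(\vect{v}-\vect{x}^*))(\vect{v}-\vect{x}^*)\|^2$, which is a polynomial in $(s,\vect{v})$ that is SOS in $\vect{v}$ for each fixed $s$.

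Integrating over $s$ preserves SOS-ness, for the following reason. The integrand has the form $\vect{z}(\vect{v})^\top Q(s)\vect{z}(\vect{v})$ with $Q(s)\succeq 0$, where $\vect{z}(\vect{v})$ is a fixed monomial vector. Integration yields a Gram matrix $\int Q\succeq 0$. Alternatively, I could quote the equivalent characterisation of SOS-convexity from \cite{ahmadi2013complete} recalled above, namely that $f(\vect{v})-f(\vect{x})-\nabla f(\vect{x})^\top(\vect{v}-\vect{x})$ is SOS in $(\vect{v},\vect{x})$, and then fix $\vect{x}=\vect{x}^*$. Substituting a constant into an SOS polynomial keeps it SOS.

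\textbf{Step 3: conclude.} Since $\nabla f(\vect{x}^*)=\vect{0}$, we have $f(\vect{v})=h(\vect{v})+f(\vect{x}^*)$. This is an SOS polynomial plus the nonnegative constant $(\sqrt{f(\vect{x}^*)})^2$, and hence $f$ is SOS.
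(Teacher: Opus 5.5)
Your argument is correct and is essentially the standard proof behind the results the paper cites; the paper itself gives no proof. That proof uses the attainment of the minimum by a convex polynomial bounded below (Belousov--Klatte), followed by the Helton--Nie Taylor-integral representation showing that $f-f(\vect{x}^*)$ is SOS when $\nabla f(\vect{x}^*)=\vect{0}$, and your Gram-matrix argument (integrating the positive semidefinite family $Q(s)$ gives a positive semidefinite Gram matrix) handles the integration step cleanly.
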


\textbf{Convex semi-algebraic sets}. A (basic) convex semi-algebraic set is a convex set defined by a finite number of polynomial inequalities, and has the form   $\mathcal{D}=\{\vect{x}\in \R^d \; : \; g_j(\vect{x}) \le 0, \; j=1,\ldots, s\}$, where $\mathcal{D}$ is a convex set and $g_j\in\R[\vect{x}]$,  $j=1,\ldots, s$, are real polynomials. Convex semi-algebraic sets play a key role in polynomial optimisation and real algebraic geometry. The common convex semi-algebraic sets are spectrahedra \cite[Section~6.2]{blekherman2012semidefinite} and polyhedra, which frequently appear in robust optimisation as uncertainty sets. A reader is referred to  \cite{blekherman2012semidefinite,lasserre2015introduction} for further details.

\begin{proposition}[{\bf Inhomogeneous $S$-Lemma}](See \cite[Proposition~4.10.1]{ben2001lectures})\label{lem:s-lemma}
Let $A,B\in \mathbb{S}^d, \vect{a},\vect{b}\in \mathbb{R}^d$, $\alpha, \beta \in \mathbb{R}$, and let $f(\vect{x}) = \vect{x}^\top A \vect{x} + 2\vect{a}^\top \vect{x} + \alpha$ and $g(\vect{x}) = \vect{x}^\top B \vect{x} + 2\vect{b}^\top \vect{x} + \beta$ be two quadratic functions. Suppose that
$f(\vect{x}_0) < 0$, for some $\vect{x}_0\in \R^d$. Then, 
\[[\; f(\vect{x}) \le 0 \implies g(\vect{x})\le 0 \;] \iff \exists \lambda\in \R_+, \; \; 
    \lambda\begin{bmatrix}
        A & \vect{a} \\ \vect{a}^\top  & \alpha
    \end{bmatrix} -  \begin{bmatrix}
        B & \vect{b} \\  \vect{b}^\top & \beta
    \end{bmatrix}\succeq 0.
\]
\end{proposition}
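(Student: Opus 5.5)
The plan is to prove the inhomogeneous $S$-lemma of Proposition~\ref{lem:s-lemma} by homogenising the two quadratics and then using a separation argument in $\mathbb{S}^{d+1}$, with the Slater point $\vect{x}_0$ ruling out a degenerate multiplier. Throughout write
\[
F=\begin{bmatrix} A & \vect{a}\\ \vect{a}^\top & \alpha\end{bmatrix},\qquad G=\begin{bmatrix} B & \vect{b}\\ \vect{b}^\top & \beta\end{bmatrix},
\]
so that $f(\vect{x})=\tilde{\vect{x}}^\top F\tilde{\vect{x}}$ and $g(\vect{x})=\tilde{\vect{x}}^\top G\tilde{\vect{x}}$ with $\tilde{\vect{x}}=(\vect{x},1)$.

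The direction ``$\Leftarrow$'' is immediate. If $\lambda F-G\succeq 0$, then $g(\vect{x})\le \lambda f(\vect{x})$ for all $\vect{x}$. Hence $f(\vect{x})\le 0$ forces $g(\vect{x})\le 0$, because $\lambda\ge 0$.

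For ``$\Rightarrow$'' I proceed in three steps.

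\textbf{Step 1 (homogenise).} I show that $\vect{z}^\top F\vect{z}\le 0$ implies $\vect{z}^\top G\vect{z}\le 0$ for every $\vect{z}=(\vect{y},t)\in\R^{d+1}$.
\begin{itemize}
\item If $t\neq 0$, scale to $\vect{x}=\vect{y}/t$ and apply the hypothesis.
\item If $t=0$, then $\vect{y}^\top A\vect{y}\le 0$. First suppose $\vect{y}^\top A\vect{y}<0$. Then the points $\vect{x}=s\vect{y}$ with $|s|$ large satisfy $f(s\vect{y})<0$, since the quadratic term dominates for $s\to\pm\infty$. So $g(s\vect{y})\le 0$, and dividing by $s^2$ gives $\vect{y}^\top B\vect{y}\le 0$.
\item If $t=0$ and $\vect{y}^\top A\vect{y}=0$, use the points $\vect{x}_0+s\vect{y}$. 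Then $f(\vect{x}_0+s\vect{y})=f(\vect{x}_0)+2s(A\vect{x}_0+\vect{a})^\top\vect{y}$. Choosing the sign of $s$ so the linear term is $\le 0$ keeps $f<0$ along an unbounded ray. Hence $g\le 0$ there, and dividing by $s^2$ yields $\vect{y}^\top B\vect{y}\le 0$.
\end{itemize}

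\textbf{Step 2 (convexity via Dines).} Consider the image $\mathcal{K}=\{(\vect{z}^\top F\vect{z},\vect{z}^\top G\vect{z}):\vect{z}\in\R^{d+1}\}\subset\R^2$. By Dines' theorem this is a convex cone. Equivalently, the joint numerical range of two quadratic forms is convex; I will cite this or prove it via the standard rank-one decomposition of PSD matrices, replacing $\vect{z}\vect{z}^\top$ by a general $Z\succeq 0$.

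Step 1 says $\mathcal{K}$ does not meet the open convex cone $\{(u,v):u\le 0,\ v>0\}$. Actually I will separate $\mathcal{K}$ from $\{(u,v): u<0,\ v>0\}$. This gives $(\mu,\nu)\neq 0$ with $\mu u-\nu v\ge 0$ on $\mathcal{K}$ and $\mu,\nu\ge 0$. That is, $\mu F-\nu G\succeq 0$.

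\textbf{Step 3 (Slater).} If $\nu=0$, then $\mu>0$ and $F\succeq 0$. This contradicts $f(\vect{x}_0)<0$. So $\nu>0$, and $\lambda=\mu/\nu$ works.

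The main obstacle is Step 2, the convexity of the joint image. That is where the real content of the $S$-lemma lies. I would follow the Yakubovich/Pólik–Terlaky route. Given $Z\succeq 0$ with $\tr(FZ)\le 0$ and $\tr(GZ)>0$, write $Z=\sum_i \vect{z}_i\vect{z}_i^\top$ and apply the rank-one reduction lemma. This lemma produces a vector $\vect{z}$ with $\vect{z}^\top F\vect{z}=\tr(FZ)$ and $\vect{z}^\top G\vect{z}=\tr(GZ)$, using a one-dimensional intermediate value argument on pairs of rank-one terms.

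Then the SDP $\max\{\tr(GZ):\tr(FZ)\le 0,\ Z\succeq 0\}$ is bounded by $0$, with the scaling fixed via homogeneity. Conic duality, with strict feasibility supplied by $\tilde{\vect{x}}_0\tilde{\vect{x}}_0^\top$ plus a small multiple of $I$, then gives the multiplier $\lambda$ directly.
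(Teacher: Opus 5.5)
The paper gives no proof of this proposition; it quotes the result from Ben-Tal and Nemirovski, so there is no internal argument to compare yours against. Your proof is correct and follows the standard route for this result. Step~1 homogenises the two quadratics, and the Slater point $\vect{x}_0$ is exactly what handles the slice $t=0$, $\vect{y}^\top A\vect{y}=0$. The homogeneous $S$-lemma then follows in either of your two ways:
\begin{itemize}
\item from Dines' convexity theorem plus separation in $\R^2$, where $f(\vect{x}_0)<0$ rules out $\nu=0$;
\item from conic duality, with strict feasibility supplied by $\tilde{\vect{x}}_0\tilde{\vect{x}}_0^\top+\epsilon I$.
\end{itemize}

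One small correction to the SDP variant. Apply the rank-one decomposition lemma to $F$ alone, writing $Z=\sum_i \vect{z}_i\vect{z}_i^\top$ with every $\vect{z}_i^\top F\vect{z}_i=\tr(FZ)/\mathrm{rank}\,Z$. This does not by itself produce a single $\vect{z}$ matching both $\tr(FZ)$ and $\tr(GZ)$ exactly; that stronger claim is equivalent to Dines' theorem. It does, however, give some $\vect{z}_i$ with $\vect{z}_i^\top F\vect{z}_i\le 0<\vect{z}_i^\top G\vect{z}_i$. That already contradicts Step~1 and is all you need.

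Also, the set $\{u\le 0,\ v>0\}$ is not open, but this is harmless. You separate from the genuinely open cone $\{u<0,\ v>0\}$.
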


    \section{Adjustable Robust Optimisation  Problems}\label{sec:aro}

Consider the fixed-recourse two-stage convex adjustable robust optimisation problem of the form:
\begin{align}\label{problem:ro-adjustable}\tag{ARP}
    \min_{\vect{x} \in \mathbb{R}^d,\, \vect{y}(\cdot)}
        & 
        \ f(\vect{x}) \\
        \text{s.t.} \
        & (\vect{a}_i^{(0)})^\top \vect{x} + \sum_{\ell=1}^k w^{(\ell)}(\vect{a}_i^{(\ell)})^\top \vect{x} + \vect{c}_i^\top \vect{y}(\vect{w})  \le b_i^{(0)} + \sum_{\ell=1}^{k} w^{(\ell)} b_i^{(\ell)}
        && \forall \vect{w} \in \mathcal{B}, \ i = 1,\ldots,m, \notag
        \\
        &
        \vect{x}\in \mathcal{C}, \; \; g_j(\vect{x}) \le 0, && j=1,\ldots,s, \notag
\end{align} 
where $f:\R^d \to \overline{\R}$ is a proximal proper, convex, lower semicontinuous function, $g_j, j=1, \ldots, s$, are SOS-convex polynomials, $\vect{a}_i^{(\ell)} \in \R^d$, $\ell=0,\ldots,k$, $i=1,\ldots,m$, $b_i^{(\ell)} \in \R$, $\ell=0,\ldots,k$, $i=1,\ldots,m$, $\vect{c}_i \in \R^q$, $i=1,\ldots,m$, and $\mathcal{B}:= \{\vect{w}\in \R^k : \|\vect{w} -\vect{d}\|^2_2 \le r\}$.  We also assume that the projection onto the closed convex set $\mathcal{C}$ admits a closed-form expression. In this model, $\vect{x}$ represents the first-stage ``here-and-now'' decision that is made before $\vect{w} \in \mathcal{B}$ is realised and $\vect{y}$ represents the ``wait-and-see'' decision that can be adjusted according to the actual data.

In general, obtaining a numerically tractable reformulation of such systems is difficult unless $\vect{y}(\cdot)$ is restricted to specific classes of mappings. A common approach is to impose affine decision rules of the form $\vect{y}(\vect{w})=\vect{y_0}+U\vect{w}$ with $\vect{y}_0 \in \R^q$ and $U \in \R^{q \times k}$ to be optimised.  For many problems, especially those with affine parameterisations, the linear decision rules lead to computationally tractable reformulations \cite{ben2009robust}. The two-stage linear optimisation problems with quadratic decision rules have also been shown to admit SDP reformulations sharing the same optimal values \cite{woolnough2021exact}. Building on this framework, we develop in this section a numerically tractable LMI-based reformulation by employing a quadratic decision rule for ARP with constraint-wise uncertainty.

By applying the quadratic decision rule (QDR), 
\[
\vect{y}(\vect{w})= \begin{bmatrix}
    \rho y_0^{{(1)}}+ \rho(U\vect{w})^{{(1)}} + (1-\rho)\vect{w}^\top \Theta_1 \vect{w} \\
    \vdots \\
    \rho y_0^{{(q)}}+ \rho (U\vect{w})^{{(q)}} + (1-\rho)\vect{w}^\top \Theta_q \vect{w}
\end{bmatrix},
\]
for some fixed $\rho \in [0,1]$, where $\vect{y}_0 \in \R^q$, $U \in \R^{q \times k}$, and $\Theta_p \in \mathbb{S}^k$, $p=1,\ldots,q$, the problem \eqref{problem:ro-adjustable} turns into the following robust convex optimisation problem:
\begin{align}\label{problem:ro-adjustable-qdr}\tag{P-QDR}
    \min_{\widehat{\vect{x}}}
        & 
        \ f(\vect{x}) + \iota_{\mathcal{C}}(\vect{x}) + \iota_{\mathcal{D}}(\vect{x})\\
        \text{s.t.} \
        & \vect{w}^\top P_i(\widehat{\vect{x}})\vect{w} + 2\vect{q}_i(\widehat{\vect{x}})^\top \vect{w} + \beta_i(\widehat{\vect{x}}) \ge 0,
        && \forall \vect{w} \in \mathcal{B}, \ i=1,\ldots, m, \notag
\end{align}
where $\widehat{\vect{x}} := (\vect{x}, \vect{y}_0, U, \vect{\Theta}) \in \R^d \times \R^q \times \R^{q \times k} \times \prod_{p=1}^q \mathbb{S}^{k}$ with $\vect{\Theta} = (\Theta_1, \ldots, \Theta_q)$, $P_i(\widehat{\vect{x}}) := -(1-\rho)\sum_{p=1}^q c_i^{(p)} \Theta_p$, $\vect{q}_i(\widehat{\vect{x}}):= -\tfrac{1}{2} \Big(\rho U^\top \vect{c}_i + \begin{bmatrix}(\vect{a}_i^{(1)})^\top \vect{x} - b_i^{(1)} & \cdots & (\vect{a}_i^{(k)})^\top \vect{x} - b_i^{(k)} \end{bmatrix}^\top \Big)$ and $\beta_i(\widehat{\vect{x}}) := -(\vect{a}_i^{(0)})^\top \vect{x} + b_i^{(0)} - \rho \vect{c}_i^\top \vect{y}_0$, $i=1,\ldots,m$.

We now present the unconstrained convex composite reformulation for ARO problems with QDR and a ball uncertainty set. 

We begin by deriving, in Lemma \ref{thm:CR-ball-quad} below, an LMI characterisation of the robust affine systems appearing in \eqref{problem:ro-adjustable-qdr} under ball uncertainty, which is a key step in our reformulation. This LMI characterisation is of independent interest, as similar systems arise in several other contexts \cite{zhen2022robust}. For example, the systems  \eqref{problem:ro-adjustable-qdr} appear also in finite-dimensional reduction problems of distributionally robust optimisation problems where $\mathcal{B}$ serves as a support set of distributions \cite{huang2024piecewise,mohajerin2018data}. Moreover, systems of this type also feature in robust set containment characterisations \cite{jeyakumar2010characterizing,jeyakumar2003characterizing}, which, in turn, underpin knowledge-based data classification \cite{mangasarian1997mathematical}.

\begin{lemma}[{\bf LMI Characterisation of Quadratic-Ball Robust System}]\label{thm:CR-ball-quad}
Let $\widehat{\vect{x}} \in \R^\eta$, $P_i: \R^\eta \to \mathbb{S}^{k}$, $\vect{q}_i: \R^\eta \to \R^k$ and $\beta_i: \R^\eta \to \R$ be affine functions of $\widehat{\vect{x}} \in \R^\eta$ for $i=1,\ldots,m$, and let $r >0$. For each fixed $\widehat{\vect{x}} \in \R^\eta$, the following statements are equivalent:
\begin{enumerate}[label=(\roman*)]
\item $ \|\vect{w} -\vect{d}\|^2_2 \le r \implies \displaystyle\min_{i=1,\ldots, m}\{\vect{w}^\top P_i(\widehat{\vect{x}})\vect{w} + 2\vect{q}_i(\widehat{\vect{x}})^\top \vect{w} + \beta_i(\widehat{\vect{x}})\} \ge 0$.
\item There exist $\lambda_i \ge 0$, $i=1,\ldots,m$, such that
\[
\begin{bmatrix}
\lambda_i I_k + P_i(\widehat{\vect{x}}) & -\lambda_i \vect{d} + \vect{q}_i(\widehat{\vect{x}}) \\
(-\lambda_i \vect{d} + \vect{q}_i(\widehat{\vect{x}}))^\top & \lambda_i(\|\vect{d}\|_2^2 - r) + \beta_i(\widehat{\vect{x}})
\end{bmatrix} \succeq 0, \quad i=1,\ldots, m.
\]
\end{enumerate}
\end{lemma}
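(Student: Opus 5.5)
The plan is to split the robust system in (i) into its $m$ individual constraints and then apply the inhomogeneous $S$-lemma (Proposition~\ref{lem:s-lemma}) to each one. Since $\widehat{\vect{x}}$ is fixed throughout, the affine dependence on $\widehat{\vect{x}}$ plays no role. We simply write $P_i, \vect{q}_i, \beta_i$ for the values at $\widehat{\vect{x}}$.

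First, statement (i) is equivalent to requiring, for every $i=1,\ldots,m$,
\[
\|\vect{w}-\vect{d}\|_2^2 - r \le 0 \implies \vect{w}^\top P_i\vect{w} + 2\vect{q}_i^\top\vect{w} + \beta_i \ge 0 ,
\]
because a minimum of finitely many numbers is nonnegative exactly when each of them is. Next I rewrite both sides as quadratics in the form used by Proposition~\ref{lem:s-lemma}. Set
\[
f(\vect{w}) := \vect{w}^\top I_k \vect{w} - 2\vect{d}^\top\vect{w} + \|\vect{d}\|_2^2 - r ,
\]
so that $A=I_k$, $\vect{a}=-\vect{d}$ and $\alpha=\|\vect{d}\|_2^2-r$. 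Set
\[
g_i(\vect{w}) := -\vect{w}^\top P_i\vect{w} - 2\vect{q}_i^\top\vect{w} - \beta_i ,
\]
so that $B=-P_i$, $\vect{b}=-\vect{q}_i$ and $\beta=-\beta_i$. With these choices, the $i$-th implication above reads $f(\vect{w})\le 0 \Rightarrow g_i(\vect{w})\le 0$.

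The Slater-type hypothesis of Proposition~\ref{lem:s-lemma} holds at $\vect{w}_0=\vect{d}$, since $f(\vect{d})=-r<0$ because $r>0$. Proposition~\ref{lem:s-lemma} then says the $i$-th implication holds if and only if there is $\lambda_i\ge 0$ with
\[
\lambda_i\begin{bmatrix} I_k & -\vect{d}\\ -\vect{d}^\top & \|\vect{d}\|_2^2-r\end{bmatrix} - \begin{bmatrix} -P_i & -\vect{q}_i\\ -\vect{q}_i^\top & -\beta_i\end{bmatrix}\succeq 0 .
\]
Expanding this difference blockwise gives exactly the matrix in (ii). Combining the $m$ equivalences, each with its own multiplier $\lambda_i$, yields (i)$\iff$(ii).

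There is no real obstacle here. The only points needing care are the sign bookkeeping, since Proposition~\ref{lem:s-lemma} is stated with "$\le 0$" and the constraint in (i) is "$\ge 0$", and checking that the strict feasibility condition is met. Both are handled above: the signs by negating the constraint function to form $g_i$, and strict feasibility by $r>0$ at the centre $\vect{d}$.
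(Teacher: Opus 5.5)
Your proposal is correct and follows essentially the same route as the paper's proof. Both split (i) into $m$ separate implications, recast each as $\|\vect{w}-\vect{d}\|_2^2 - r \le 0 \Rightarrow -\vect{w}^\top P_i\vect{w} - 2\vect{q}_i^\top\vect{w} - \beta_i \le 0$, and apply the inhomogeneous $S$-lemma to obtain exactly the LMI in (ii). Your explicit strictly feasible point $\vect{w}_0=\vect{d}$, with value $-r<0$, spells out what the paper justifies only with ``since $r>0$''.
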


\begin{proof} Fix $\widehat{\vect{x}} \in \R^\eta$. Firstly, notice that statement \textit{(i)} is equivalent to 
\begin{equation}\label{eqn:qball-i-equiv}
\|\vect{w} -\vect{d}\|^2_2 \le r \implies \vect{w}^\top P_i(\widehat{\vect{x}})\vect{w} + 2\vect{q}_i(\widehat{\vect{x}})^\top \vect{w} + \beta_i(\widehat{\vect{x}}) \ge 0
\end{equation}
for each $i=1,\ldots,m$. Now, fix $i=1,\ldots, m$. Define the quadratic functions:
\[
\widetilde{f}(\vect{w}) := \vect{w}^\top I_k \vect{w} - 2\vect{d}^\top \vect{w} + \|\vect{d}\|_2^2 - r
\quad \text{and} \quad
\widetilde{g}_i(\vect{w}) := -\vect{w}^\top P_i(\widehat{\vect{x}}) \vect{w}
-2\vect{q}_i(\widehat{\vect{x}})^\top \vect{w}
-\beta_i(\widehat{\vect{x}}).
\]
Then, \Cref{eqn:qball-i-equiv} is equivalent to the implication
$
\widetilde{f}(\vect{w}) \le 0 \implies \widetilde{g}_i(\vect{w}) \le 0.
$ 
Since $r>0$, \Cref{lem:s-lemma} (Inhomogeneous $S$-Lemma) applies and the implication is equivalent to the existence of $\lambda_i \ge 0$ such that 
\begin{equation}\label{eqn:qball-ii}
\lambda_i 
\begin{bmatrix} I_k & -\vect{d} \\ -\vect{d}^\top &  \|\vect{d}\|_2^2 - r \end{bmatrix} + 
\begin{bmatrix} P_i(\widehat{\vect{x}}) & \vect{q}_i(\widehat{\vect{x}}) \\ \vect{q}_i(\widehat{\vect{x}})^\top & \beta_i(\widehat{\vect{x}}) \end{bmatrix}
\succeq 0,
\end{equation}
which is precisely the LMI in statement \textit{(ii)}. Since the equivalence between
\eqref{eqn:qball-i-equiv} and \eqref{eqn:qball-ii} holds for arbitrary $i=1,\ldots,m$ and for any fixed $\widehat{\vect{x}}$, then the equivalence of statements \textit{(i)} and \textit{(ii)} follows.
\end{proof}

We now show that problem \eqref{problem:ro-adjustable-qdr} share the same optimal values with its convex composite reformulation given by:
\begin{equation*}\label{eqn:comp_prob_adjustable_qdr}\tag{CCP}
\min_{\widetilde{\vect{x}}\in\widetilde{X}}
\Big\{
F(\widetilde{\vect{x}}) + E(\widetilde{\vect{x}}) + H(\widetilde{\vect{x}}) + G\big(K\widetilde{\vect{x}}\big)
\Big\},
\end{equation*}
where $\widetilde{X}
:= \R^d \times \R^q \times \R^{q\times k} \times \prod_{p=1}^q\mathbb{S}^{k} \times \R^{m}$. For $\widetilde{\vect{x}}
= \big(\vect{x},\,\vect{y}_0,\,U, \vect{\Theta}, \boldsymbol{\lambda}\big) \in \widetilde{X}$ with $\vect{\Theta}=(\Theta_1,\ldots,\Theta_q)$, $\vect{\lambda}=(\lambda_1,\ldots,\lambda_m)$ and $\vect{\widetilde{y}} = (\Psi_1, \ldots, \Psi_m) \in \widetilde{Y} := \prod_{i=1}^m\mathbb{S}^{k+1}$, the proper, convex, lower semicontinuous functions $F,E,H:\widetilde{X}\to\overline{\R}$ and $G:\widetilde{Y}\to\overline{\R}$ are given by 
\begin{equation}\label{eqn:FEHG-SDP2}
F(\widetilde{\vect{x}}):= f(\vect{x}) + \sum_{i=1}^m \iota_{\R_+}(\lambda_i), \ E(\widetilde{\vect{x}})
:= \iota_{\mathcal{C}}(\vect{x}), \ H(\widetilde{\vect{x}})
:= \iota_{\mathcal{D}}(\vect{x}), \ 
G(\widetilde{\vect{y}}):= \sum_{i=1}^m\iota_{B_i + \mathbb{S}_+^{k+1}}(\Psi_i)
\end{equation}
where 
\begin{equation*}
B_i = \begin{bmatrix} 0 & -\tfrac{1}{2}\boldsymbol{b}_i\\ -\tfrac{1}{2}\boldsymbol{b}_i^\top & -b_i^{(0)}\end{bmatrix}, \ \vect{b}_i = \begin{bmatrix} b_i^{(1)} & \cdots & b_i^{(k)} \end{bmatrix}^\top, \ i=1,\ldots, m.
\end{equation*}

Furthermore, the linear mapping $K:\widetilde{X}\to\widetilde{Y}$ is given by 
\begin{equation}\label{eqn:K-SDP2}
K\widetilde{\vect{x}}
:=\big( \Psi_1(\widetilde{\vect{x}}),\ldots,\Psi_m(\widetilde{\vect{x}})\big),
\end{equation}where, for each $i=1,\ldots,m$,
\[
\Psi_i(\widetilde{\vect{x}})
=
\begin{bmatrix}
\lambda_i I_k - (1-\rho)\displaystyle\sum_{p=1}^q c_i^{(p)} \Theta_p
&
-\lambda_i \vect{d} - \tfrac{1}{2}\!\left(\rho U^\top \vect{c}_i + A_i^\top \vect{x}\right)
\\[4pt]
\big(-\lambda_i \vect{d} - \tfrac{1}{2}(\rho U^\top \vect{c}_i + A_i^\top \vect{x})\big)^\top
&
\lambda_i(\|\vect{d}\|_2^2 - r) - (\vect{a}_i^{(0)})^\top \vect{x} - \rho \vect{c}_i^\top \vect{y}_0
\end{bmatrix}
\in \mathbb{S}^{k+1},
\]
with $A_i:= \begin{bmatrix} \vect{a}_i^{(1)} & \cdots &  \vect{a}_i^{(k)} \end{bmatrix}$, $i=1,\ldots,m$. 

\begin{theorem}[\bf Convex Composite form of \eqref{problem:ro-adjustable-qdr}]\label{prop:ro-adjustable-sdp}
Consider the problem \eqref{problem:ro-adjustable-qdr}. Let $\rho \in [0,1]$ and $\mathcal{B} = \{\vect{w} \in \R^k : \|\vect{w} - \vect{d}\|_2^2 \le r\}$, for some $r>0$. Let $F$, $E$, $H$, $G$, and $K$ be as defined in \Cref{eqn:FEHG-SDP2,eqn:K-SDP2}. Then
\begin{equation*}
\min \eqref{problem:ro-adjustable-qdr}
=
\min_{\widetilde{\vect{x}}\in\widetilde{X}}
\Big\{
F(\widetilde{\vect{x}}) + E(\widetilde{\vect{x}}) + H(\widetilde{\vect{x}}) + G\big(K\widetilde{\vect{x}}\big)
\Big\}.
\end{equation*}
\end{theorem}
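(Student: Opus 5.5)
The plan is to show that the feasible set of \eqref{problem:ro-adjustable-qdr}, projected onto $\widehat{\vect{x}}$, coincides with the projection onto $\widehat{\vect{x}}$ of the effective domain of the composite objective in \eqref{eqn:comp_prob_adjustable_qdr}, while the objective values agree on corresponding points. Since $F$ contains $f(\vect{x})$ plus indicators, and $E,H$ are exactly $\iota_{\mathcal{C}},\iota_{\mathcal{D}}$, the objective of \eqref{eqn:comp_prob_adjustable_qdr} at $\widetilde{\vect{x}}=(\widehat{\vect{x}},\vect{\lambda})$ equals $f(\vect{x})+\iota_{\mathcal{C}}(\vect{x})+\iota_{\mathcal{D}}(\vect{x})$ whenever $\vect{\lambda}\in\R^m_+$ and $G(K\widetilde{\vect{x}})=0$, and $+\infty$ otherwise. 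So the whole argument reduces to a constraint equivalence, after which the optimal values (as infima, with the convention $\inf\emptyset=+\infty$) coincide.

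The first step is to decode $G(K\widetilde{\vect{x}})=0$. This means $\Psi_i(\widetilde{\vect{x}})\in B_i+\mathbb{S}^{k+1}_+$, that is, $\Psi_i(\widetilde{\vect{x}})-B_i\succeq 0$ for each $i$. Subtracting $B_i$ adds $\tfrac12\vect{b}_i$ to the off-diagonal block and $b_i^{(0)}$ to the corner entry. I will check by direct bookkeeping that the result is exactly the matrix in Lemma~\ref{thm:CR-ball-quad}(ii), with $P_i,\vect{q}_i,\beta_i$ as defined after \eqref{problem:ro-adjustable-qdr}. Concretely, $-\tfrac12(\rho U^\top\vect{c}_i+A_i^\top\vect{x})+\tfrac12\vect{b}_i=\vect{q}_i(\widehat{\vect{x}})$, since $A_i^\top\vect{x}-\vect{b}_i$ is the vector with entries $(\vect{a}_i^{(\ell)})^\top\vect{x}-b_i^{(\ell)}$. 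Likewise the corner entry becomes $\lambda_i(\|\vect{d}\|^2-r)+\beta_i(\widehat{\vect{x}})$. This verification of the sign and factor conventions is the only place I expect any care to be needed; it is the main (though routine) obstacle.

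Next, I apply Lemma~\ref{thm:CR-ball-quad}, which is valid since $r>0$ and $P_i,\vect{q}_i,\beta_i$ are affine in $\widehat{\vect{x}}$. It shows that, for fixed $\widehat{\vect{x}}$, the robust constraints of \eqref{problem:ro-adjustable-qdr} hold if and only if there exists $\vect{\lambda}\in\R^m_+$ with $G(K(\widehat{\vect{x}},\vect{\lambda}))=0$. The two directions then follow.

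\textbf{From \eqref{problem:ro-adjustable-qdr} to the composite problem.} Given a feasible $\widehat{\vect{x}}$, pick such a $\vect{\lambda}$. The composite objective then equals the \eqref{problem:ro-adjustable-qdr} objective, so the composite value is at most $\min$\eqref{problem:ro-adjustable-qdr}.

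\textbf{From the composite problem to \eqref{problem:ro-adjustable-qdr}.} Given $\widetilde{\vect{x}}$ with finite composite objective, all indicators vanish. The lemma then gives robust feasibility of $\widehat{\vect{x}}$ with the same objective value, which yields the reverse inequality.

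Finally, attainment transfers: a minimiser of either problem produces a minimiser of the other by adding or dropping $\vect{\lambda}$. This justifies writing $\min$ on both sides, or else both sides are equal infima.
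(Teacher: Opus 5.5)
Your proposal is correct and follows essentially the same route as the paper: for each fixed $\widehat{\vect{x}}$ you apply Lemma~\ref{thm:CR-ball-quad}, observe that the resulting LMIs are exactly $\Psi_i(\widetilde{\vect{x}})-B_i\succeq 0$ (equivalently $G(K\widetilde{\vect{x}})=0$), and note that $\lambda_i\ge 0$ is absorbed into $F$; your sign bookkeeping for the shift by $B_i$ checks out. Your explicit two-direction comparison of infima, together with the transfer of minimisers by adding or dropping $\vect{\lambda}$, is slightly more careful than the paper, which writes $\min$ throughout without justifying attainment.
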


\begin{proof} Fix an arbitrary $\widehat{\vect{x}} = (\vect{x}, \vect{y}_0, U, \vect{\Theta}) \in \R^d \times \R^q \times \R^{q \times k} \times \prod_{p=1}^q \mathbb{S}^{k}$ where $\vect{\Theta} = (\Theta_1, \ldots, \Theta_q)$. For each $i=1,\ldots,m$, the robust constraints of \eqref{problem:ro-adjustable-qdr} is equivalent to the implication
\begin{equation}\label{eqn:two-stage-cons}
    \|\vect{w} -\vect{d}\|^2_2 \le r \implies \vect{w}^\top P_i(\widehat{\vect{x}})\vect{w} + 2\vect{q}_i(\widehat{\vect{x}})^\top \vect{w} + \beta_i(\widehat{\vect{x}}) \ge 0, \ i=1,\dots, m,
\end{equation}
where $P_i(\widehat{\vect{x}}) := -(1-\rho)\sum_{p=1}^q c_i^{(p)} \Theta_p$, $\vect{q}_i(\widehat{\vect{x}}):= -\tfrac{1}{2} \big(\rho U^\top \vect{c}_i + \big[(\vect{a}_i^{(1)})^\top \vect{x} - b_i^{(1)} \, \cdots \, (\vect{a}_i^{(k)})^\top \vect{x} - b_i^{(k)} \big]^\top \big)$, and $\beta_i(\widehat{\vect{x}}) := -(\vect{a}_i^{(0)})^\top \vect{x} + b_i^{(0)} - \rho \vect{c}_i^\top \vect{y}_0$.
Since $r>0$, \Cref{thm:CR-ball-quad} applies and the implication is equivalent to the existence of $\lambda_i \ge 0$, $i=1,\ldots,m$ such that 
\begin{equation*}
\begin{bmatrix}
\lambda_i I_k + P_i(\widehat{\vect{x}}) & -\lambda_i \vect{d} + \vect{q}_i(\widehat{\vect{x}}) \\
(-\lambda_i \vect{d} + \vect{q}_i(\widehat{\vect{x}})^\top & \lambda_i(\|\vect{d}\|_2^2 - r) + \beta_i(\widehat{\vect{x}})
\end{bmatrix} \succeq 0, \quad i=1,\ldots, m.
\end{equation*}
Substituting for $P_i(\widehat{\vect{x}})$, $\vect{q}_i(\widehat{\vect{x}})$ and $\beta_i(\widehat{\vect{x}})$ in \eqref{eqn:two-stage-cons-equiv} yields the LMI constraints
\begin{align}\label{eqn:two-stage-cons-equiv}
        \begin{bmatrix}
        \lambda_i I_k  - (1-\rho)\sum_{p=1}^q c_i^{(p)} \Theta_p & -\lambda_i \vect{d}  -\tfrac{1}{2} \left(\rho U^\top \vect{c}_i + A_i^\top \vect{x} - \vect{b}_i\right) \\[6pt]
        \big(-\lambda_i \vect{d}  -\tfrac{1}{2} \left(\rho U^\top \vect{c}_i + A_i^\top \vect{x} - \vect{b}_i\right)\big)^\top & \lambda_i(\|\vect{d}\|_2^2 - r) -(\vect{a}_i^{(0)})^\top \vect{x} + b_i^{(0)} - \rho\vect{c}_i^\top \vect{y}_0
        \end{bmatrix} \succeq 0,
\end{align}
for $i=1,\ldots, m$, where $A_i:= \begin{bmatrix} \vect{a}_i^{(1)} & \cdots &  \vect{a}_i^{(k)} \end{bmatrix}$, and $\vect{b}_i = \begin{bmatrix} b_i^{(1)} & \cdots & b_i^{(k)} \end{bmatrix}^\top$, $i=1,\ldots,m$. Since the equivalence between \eqref{eqn:two-stage-cons} and \eqref{eqn:two-stage-cons-equiv} holds for arbitrary $\widehat{\vect{x}} = (\vect{x}, \vect{y}_0, U, \vect{\Theta}) \in \R^d \times \R^q \times \R^{q \times k} \times \prod_{p=1}^q \mathbb{S}^{k}$, we arrive at the convex SDP
\begin{align}\label{problem:ro-adjustable-sdp}
\min_{\substack{ \vect{x} \in \R^d,\,\vect{y_0} \in \R^q \\ U \in \R^{q \times k}, \, \Theta_p \in \mathbb{S}^{k}\\ \lambda_i \in \R}}
        & f(\vect{x}) + \sum_{i=1}^m\iota_{\R_+}(\lambda_i) + \iota_{\mathcal{C}}(\vect{x}) + \iota_{\mathcal{D}}(\vect{x}) 
        \\
        \st\,
        &
        \begin{bmatrix}
        \lambda_i I_k  - (1-\rho)\sum_{p=1}^q c_i^{(p)} \Theta_p & -\lambda_i \vect{d}  -\tfrac{1}{2} \left(\rho U^\top \vect{c}_i + A_i^\top \vect{x} - \vect{b}_i\right) \\[6pt]
        \big(-\lambda_i \vect{d}  -\tfrac{1}{2} \left(\rho U^\top \vect{c}_i + A_i^\top \vect{x} - \vect{b}_i\right)\big)^\top & \lambda_i(\|\vect{d}\|_2^2 - r) -(\vect{a}_i^{(0)})^\top \vect{x} + b_i^{(0)} - \rho\vect{c}_i^\top \vect{y}_0
        \end{bmatrix} \succeq 0,\notag
        \\ 
        & i=1,\ldots, m,\notag
\end{align}
where $\mathcal{D} : = \{g_j(\vect{x}) \le 0, \ j=1,\ldots,s\}$, which satisfies $\min \eqref{problem:ro-adjustable-qdr} = \min\eqref{problem:ro-adjustable-sdp}$.

The objective function of the convex SDP in \eqref{problem:ro-adjustable-sdp}
already decomposes into the terms
$F(\widetilde{\vect{x}})$, $E(\widetilde{\vect{x}})$, and $H(\widetilde{\vect{x}})$
defined in \Cref{eqn:FEHG-SDP2}. Moreover, by the definitions of $G$ and $K$
in \Cref{eqn:FEHG-SDP2,eqn:K-SDP2}, the LMI constraints in
\eqref{problem:ro-adjustable-sdp} can be written compactly as $ 
K\widetilde{\vect{x}} \in \prod_{i=1}^m \big(B_i + \mathbb{S}_+^{k+1}\big)$,
where $B_i$, $i=1,\dots,m$, are defined in \Cref{eqn:FEHG-SDP2}. Since $G$ is the indicator of the set $\prod_{i=1}^m (B_i + \mathbb{S}_+^{k+1})$,
these constraints are equivalent to requiring $G\big(K\widetilde{\vect{x}}\big) = 0$. Therefore, $\min\eqref{problem:ro-adjustable-qdr} = \min\eqref{problem:ro-adjustable-sdp} = \min_{\widetilde{\vect{x}}\in\widetilde{X}}
\big\{F(\widetilde{\vect{x}})
+ E(\widetilde{\vect{x}})
+ H(\widetilde{\vect{x}})
+ G\big(K\widetilde{\vect{x}}\big)\big\}$, as desired.
\end{proof}

\section{Computable SDP-Based Formulas for Proximal Operators} \label{sec:projections-via-SDP}

In this section, we derive formulas for computing projections onto closed convex sets, described in terms of SOS-convex polynomials, and proximal operators from the solution of associated SDPs.

Recall that $P_{\mathcal{D}}$ denotes the projection onto the closed convex set $\mathcal{D}$. We first derive a closed-form formula for $P_{\mathcal{D}}(\vect{v})$. For a given $\vect{v}\in \mathbb{R}^d$, we note that $P_{\mathcal{D}}(\vect{v})$ is the unique solution to the convex minimisation problem with SOS-convex constraints:
\begin{align*}\label{problem:prox-deltaD}\tag{PD}
    \overline{\mu}_{\vect{v}}= \inf_{\vect{x} \in \R^d} \left\{\|\vect{v} - \vect{x}\|_2^2 \, : \,  g_j(\vect{x}) \le 0,\, j=1,\ldots,s  \right\}.
\end{align*}

Let $h(\vect{x}):=\|\vect{v}-\vect{x}\|_2^2$, $\omega$ be the smallest even integer such that $\omega \ge \max_{j=1,\ldots,s}\deg g_j$, $\mathcal{\bf N}_\omega^d := \{(\alpha_1,\ldots,\alpha_d): \alpha_i \in \mathcal{N}_0, \, \sum_{i=1}^d \alpha_i\le \omega\}$ be a multi-index set where $\mathcal{N}_0$ is the set of nonnegative integers, and $s(d,\omega) := |\mathcal{\bf N}_\omega^d| = \binom{d+\omega}{\omega}$.

The Lagrangian dual of \eqref{problem:prox-deltaD} can be equivalently reformulated as 
\begin{equation}\label{DMP0}\tag{DMP$_{\vect{v}}$}
            \max_{\vect{\lambda}\in \R_+^s, \gamma \in \R, \ \sigma\in \Sigma_{\omega}^2} \bigg\{ \gamma : \; 
            \|\vect{v}- \cdot \|_2^2 + \sum_{j=1}^s \lambda_j g_j - \gamma =\sigma \bigg\}.
        \end{equation}
The duality between \eqref{problem:prox-deltaD} and \eqref{DMP0} is given below. To maintain a smooth flow in this section and minimise technical details related to polynomial optimisation, the proofs are deferred to the Appendix.

    \begin{proposition}[{\bf Duality for projection onto ${\mathcal{D}}$}]\label{B1}
        Let  $h(\vect{x})=\|\vect{v}-\vect{x}\|_2^2$ and $\mathcal{D} = \{\vect{x} \in \R^d : g_j(\vect{x}) \le 0, \, j=1,\ldots,s\}$, where $g_j$, $j=1,\ldots,s$, are SOS-convex polynomials. Assume that the Slater constraint qualification holds for \eqref{problem:prox-deltaD}. Let $\omega$ be an even integer such that $\omega \geq  \max_{j=1,\ldots,s} \deg g_j$. Then $\overline{\mu}_{\vect{v}}  = \max \eqref{DMP0}$.
    \end{proposition}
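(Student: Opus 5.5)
The plan is to establish weak duality first and then strong duality, with strong duality coming from Lagrangian duality under Slater's condition together with the SOS representation of nonnegative SOS-convex polynomials (\Cref{prop:nn_sos_conv}).

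\emph{Weak duality.} Let $(\vect{\lambda},\gamma,\sigma)$ be feasible for \eqref{DMP0} and let $\vect{x}\in\mathcal{D}$. Evaluating the polynomial identity at $\vect{x}$ gives
\[
\|\vect{v}-\vect{x}\|_2^2-\gamma=\sigma(\vect{x})-\sum_{j=1}^s\lambda_j g_j(\vect{x})\ge 0 .
\]
This holds because $\sigma$ is SOS, $\lambda_j\ge 0$ and $g_j(\vect{x})\le 0$. Hence $\gamma\le\overline{\mu}_{\vect{v}}$, and therefore $\sup\eqref{DMP0}\le\overline{\mu}_{\vect{v}}$.

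\emph{Strong duality.} Problem \eqref{problem:prox-deltaD} is a convex program, since $h$ is strongly convex and each $g_j$ is SOS-convex and hence convex. It is feasible by Slater, and its minimum is attained because $h$ is coercive and $\mathcal{D}$ is closed. Slater's condition gives the standard convex Lagrangian duality with dual attainment: there exists $\overline{\vect{\lambda}}\in\R^s_+$ such that
\[
\overline{\mu}_{\vect{v}}=\min_{\vect{x}\in\R^d}\Big\{h(\vect{x})+\sum_{j=1}^s\overline{\lambda}_j g_j(\vect{x})\Big\}.
\]
Define $L:=h+\sum_j\overline{\lambda}_j g_j-\overline{\mu}_{\vect{v}}$. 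Then $L$ is nonnegative on $\R^d$. It is also SOS-convex: its Hessian is $2I_d+\sum_j\overline{\lambda}_j\nabla^2 g_j$, a nonnegative combination of SOS matrix polynomials, which is again an SOS matrix polynomial. By \Cref{prop:nn_sos_conv}, $L=\sigma$ for some SOS polynomial $\sigma$.

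\emph{Degree check.} This is the only step requiring care. An SOS polynomial has even degree, and $\deg\sigma\le\max\{2,\deg g_j\}\le\omega$ since $\omega$ is even and $\omega\ge\max_j\deg g_j$. (If some $g_j$ is constant the same bound still holds, because $\omega\ge 2$ may be assumed; otherwise I would just take the degree from the quadratic part.) Hence $\sigma\in\Sigma^2_\omega$. So $(\overline{\vect{\lambda}},\overline{\mu}_{\vect{v}},\sigma)$ is feasible for \eqref{DMP0} with objective value $\overline{\mu}_{\vect{v}}$. Combined with weak duality, the supremum is attained and equals $\overline{\mu}_{\vect{v}}$.

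The main obstacle is the passage to an exact SOS certificate, with no relaxation gap. This relies entirely on \Cref{prop:nn_sos_conv} and on checking that SOS-convexity is preserved under nonnegative combinations and the addition of the convex quadratic $h$. Both are immediate from the Hessian characterisation.
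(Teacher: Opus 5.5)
Your proof is correct and follows the paper's argument essentially step for step: weak duality by evaluating the certificate at points of $\mathcal{D}$, then Lagrangian duality under Slater to get a multiplier $\overline{\vect{\lambda}}$, then \Cref{prop:nn_sos_conv} applied to the nonnegative SOS-convex polynomial $h+\sum_j\overline{\lambda}_j g_j-\overline{\mu}_{\vect{v}}$. Your explicit checks fill in details the paper leaves unstated: that SOS-convexity is preserved under the nonnegative combination, and that the certificate lies in $\Sigma^2_\omega$, which tacitly needs $\omega\ge 2$, as the paper also implicitly assumes.
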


\begin{proof}
    The proof is given in Appendix~\ref{sec:sos}.
\end{proof}

As $h$ and $g_j, j=1,\ldots, s$, are all polynomials with degree at most $\omega$, one can write 
\[
h(\vect{x})=\sum_{\vect{\alpha} \in \mathcal{\bf N}_\omega^d} h_{\vect{\alpha}} \vect{x}^{\vect{\alpha}} \mbox{ and } g_{j}(\vect{x})= \sum_{\vect{\alpha} \in \mathcal{\bf N}_\omega^d} (g_{j})_{\vect{\alpha}} \vect{x}^{\vect{\alpha}},
\]
where $h_{\vect{\alpha}} \in \mathbb{R}$ and $(g_{j})_{\vect{\alpha}} \in \mathbb{R}$ are the real coefficients of $h$ and $g_j$ associated to the monomials $\vect{x}^{\vect{\alpha}}$, $\vect{\alpha} \in \mathcal{\bf N}_\omega^d$ respectively. The SDP reformulation of \eqref{DMP0} is given by
\begin{equation}\label{SDP0}\tag{SDP$_{\vect{v}}$}
\widehat{\mu}_{\vect{v}}:=\max_{\vect{\lambda} \in \R_+^s,\, \gamma \in \R,\,  Q \in \mathbb{S}^{\nu_0}_+}  \bigg\{ \gamma \, : \, h_{\vect{\alpha}} + \sum_{j=1}^s \lambda_j (g_j)_{\vect{\alpha}} - \gamma q_{\vect{\alpha}} = \tr(Q B_{
\vect{\alpha}}),
\ \vect{\alpha} \in { \mathcal{\bf N}_\omega^d}  \bigg\},
\end{equation}
where $q_{\vect{\alpha}} = 1$ for $\vect{\alpha} = \vect{0} \in { \mathcal{\bf N}_\omega^d}$ and $q_{\vect{\alpha}} = 0$ otherwise. Here, $B_{\vect{\alpha}} \in \mathbb{S}^{\nu_0}$ is the so-called moment matrices \cite{lasserre2009moments} with $\nu_0=s(d,\omega/2)$ (see Appendix~\ref{sec:sos} for details).
Its dual SDP is given by
\begin{equation}\label{MP}\tag{MP$_{\vect{v}}$}
\displaystyle\min_{\vect{y} \in \R^{s(d,\omega)}} \bigg\{\sum_{\vect{\alpha} \in \mathcal{\bf N}_\omega^d} h_{\vect{\alpha}} \vect{y}_{\vect{\alpha}} \, : \,   \sum_{\alpha \in \mathcal{\bf N}_\omega^d} (g_j)_{\vect{\alpha}} \vect{y}_{\vect{\alpha}} \leq 0, \, j=1,\ldots,s, \, \sum_{\alpha \in \mathcal{\bf N}_\omega^d} \vect{y}_{\vect{\alpha}} B_{\vect{\alpha}} \succeq 0,\, \vect{y}_{\vect{0}}=1 \bigg\}. 
\end{equation}

 Below, we give a computable SDP-based formula for the projection mapping to the convex set $\mathcal{D}=\{\vect{x}\in \R^d \, : \, g_j(\vect{x}) \le 0, \, j=1,\ldots, s\}$, where $g_j$'s are SOS-convex polynomials.

\begin{theorem}[{\bf Computable formula for $P_{\mathcal{D}}(\vect{v})$}]\label{formula}
For a fixed $\vect{v}\in \mathbb{R}^d$, let $h(\vect{x})=\|\vect{x}-\vect{v}\|_2^2$.  Let $g_j, j=1,\ldots, s$, be SOS-convex polynomials and let $\mathcal{D}=\{\vect{x}\in \R^d  :  g_j(\vect{x}) \le 0,\, j=1,\ldots, s\}$. Assume that $\max\eqref{SDP0}=\min\eqref{MP}$. Suppose that $\vect{y}_{\vect{v}}^*\in \R^{s(d,\omega)}$ is a solution  to the semidefinite program \eqref{MP} with $\vect{y}_{\vect{v}}^*=\big({y}_{\vect{v},\vect{\alpha}}^*\big)_{\vect{\alpha} \in \mathcal{\bf N}_\omega^d}$. Then,
\[
P_{\mathcal{D}}(\vect{v})= \big(y^*_{\vect{v},\vect{e}_1^{[d]}},\ldots,  y^*_{\vect{v},\vect{e}_d^{[d]}}\big)
\]
where $\vect{e}_i^{[d]}$, $i=1,\ldots,d$, are the multi-indices in $\mathcal{\bf N}_\omega^d$ whose $i$\textsuperscript{th} component is one, and zero otherwise. 
\end{theorem}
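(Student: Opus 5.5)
Write $\bar{\vect x}:=\big(y^*_{\vect{v},\vect{e}_1^{[d]}},\ldots,y^*_{\vect{v},\vect{e}_d^{[d]}}\big)$. The plan is to prove two facts: that $\bar{\vect x}\in\mathcal{D}$, and that $h(\bar{\vect x})\le \overline{\mu}_{\vect v}$. Since $h$ is strictly convex, \eqref{problem:prox-deltaD} has the unique minimiser $P_{\mathcal{D}}(\vect v)$. Hence these two facts force $\bar{\vect x}=P_{\mathcal{D}}(\vect v)$.

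The main tool is a Jensen-type inequality for SOS-convex polynomials with respect to moment vectors. Let $\vect y\in\R^{s(d,\omega)}$ satisfy $\vect y_{\vect 0}=1$ and $\sum_{\vect\alpha} \vect y_{\vect\alpha}B_{\vect\alpha}\succeq 0$. Define the Riesz functional $L_{\vect y}(p):=\sum_{\vect\alpha} p_{\vect\alpha}\vect y_{\vect\alpha}$, and set $\bar{\vect x}:=(L_{\vect y}(x_1),\ldots,L_{\vect y}(x_d))$. Then for every SOS-convex polynomial $g$ with $\deg g\le\omega$ we want
\[
L_{\vect y}(g)\ \ge\ g(\bar{\vect x}).
\]
To prove it, consider $\phi(\vect x):=g(\vect x)-g(\bar{\vect x})-\nabla g(\bar{\vect x})^\top(\vect x-\bar{\vect x})$. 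This polynomial is SOS-convex, being $g$ plus an affine function. It is also nonnegative, by convexity of $g$. By \Cref{prop:nn_sos_conv}, $\phi$ is therefore SOS, with $\deg\phi\le\omega$, so $\phi=\sum_l p_l^2$ with $\deg p_l\le\omega/2$. Positive semidefiniteness of the moment matrix gives $L_{\vect y}(p_l^2)=\vect p_l^\top\big(\sum_{\vect\alpha}\vect y_{\vect\alpha}B_{\vect\alpha}\big)\vect p_l\ge 0$, hence $L_{\vect y}(\phi)\ge0$. Finally, $L_{\vect y}$ is linear, $L_{\vect y}(1)=1$, and $L_{\vect y}$ reproduces $\bar{\vect x}$ on linear monomials, so $L_{\vect y}(\phi)=L_{\vect y}(g)-g(\bar{\vect x})$. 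This gives the inequality.

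Applying this with $\vect y=\vect y^*_{\vect v}$:

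1. Feasibility in \eqref{MP} gives $g_j(\bar{\vect x})\le L_{\vect y^*_{\vect v}}(g_j)\le 0$ for each $j$, so $\bar{\vect x}\in\mathcal{D}$.
2. The objective $h$ is convex quadratic, hence SOS-convex, so $h(\bar{\vect x})\le L_{\vect y^*_{\vect v}}(h)=\min\eqref{MP}$.
3. By hypothesis, $\min\eqref{MP}=\max\eqref{SDP0}=\widehat\mu_{\vect v}$.
4. Weak duality gives $\widehat\mu_{\vect v}\le\overline\mu_{\vect v}$. Indeed, for any $\vect x\in\mathcal{D}$ and any $(\vect\lambda,\gamma,\sigma)$ feasible for \eqref{DMP0}, we have $h(\vect x)-\gamma\ge h(\vect x)+\sum_j\lambda_jg_j(\vect x)-\gamma=\sigma(\vect x)\ge0$. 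Alternatively, Proposition~\ref{B1} gives equality where Slater holds.
5. Combining these, $h(\bar{\vect x})\le\overline\mu_{\vect v}$, and uniqueness of the minimiser concludes.

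The main obstacle is the Jensen step. It needs a careful justification that an SOS decomposition of $\phi$ uses polynomials of degree at most $\omega/2$, which holds since $\omega$ is even and $\deg\phi\le\omega$. It also needs the correct identification of $L_{\vect y}(p^2)$ with the quadratic form of the moment matrix in the monomial basis $B_{\vect\alpha}$, as set up in Appendix~\ref{sec:sos}. A further point is that the SDP equivalence between \eqref{DMP0} and \eqref{SDP0} must hold with matching degree bound $\omega$, so that the weak duality chain in step 4 is valid.
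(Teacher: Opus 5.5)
Your proposal is correct and follows the paper's appendix proof. You take the first moments of $\vect{y}^*_{\vect v}$, apply the Jensen inequality for SOS-convex polynomials to each $g_j$ (for feasibility) and to $h$ (for the bound $h(\bar{\vect x})\le\overline\mu_{\vect v}$), and conclude by uniqueness of the projection.

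The differences are minor. You derive the Jensen inequality from Proposition~\ref{prop:nn_sos_conv} rather than citing \cite[Theorem~2.6]{lasserre2009convexity}. Your weak-duality step $\widehat\mu_{\vect v}\le\overline\mu_{\vect v}$ supplies what the appendix proof takes from its stronger hypothesis $\overline\mu_{\vect v}=\max\eqref{SDP0}$, which is not part of the statement as given. In fact $\min\eqref{MP}\le\overline\mu_{\vect v}$ holds with no duality assumption at all, since $(\vect x^{\vect\alpha})_{\vect\alpha}$ is feasible for \eqref{MP} for every $\vect x\in\mathcal D$.
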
  

\begin{proof}
    The proof is given in Appendix~\ref{sec:sos}.
\end{proof}

Following the above approach, we derive an easily computable formula for the projection onto the closed convex set, 
$\mathcal{D}_{\textup{quad}}=\{\vect{x}\in \R^d \,:\, \vect{x}^\top A_j \vect{x} + \vect{b}_j^\top \vect{x} + r_j \le 0, \, j=1,\ldots, s\}$, where $A_j \succeq 0$, $ \vect{b}_j\in\R^d$, $r_j \in \R$, $j=1,\ldots,s$. That is, we consider the projection problem:
\begin{align*}\label{problem:qp}\tag{PQ}
    \inf_{\vect{x} \in \R^d} \left\{\|\vect{v} - \vect{x}\|_2^2 \, : \,  \vect{x}^\top A_j\vect{x}+\vect{b}_j^\top\vect{x}+r_j \le 0, \; j=1,\ldots,s  \right\}.
\end{align*}
The Lagrangian dual of the projection problem \eqref{problem:qp} is the SDP given by: 
\begin{align*}\label{dq1}\tag{PQ1}
\max_{\vect{\lambda} \in \mathbb{R}_+^d, \, t \in \mathbb{R}} \Bigg\{
 t  \, : \, 
\begin{bmatrix}
\sum_{j=1}^s\lambda_jA_j+I_d
& \frac{1}{2}(\sum_{j=1}^s\lambda_j \vect{b}_j -  2 \vect{v}) \\[4pt]
\frac{1}{2}(\sum_{j=1}^s\lambda_j \vect{b}_j - 2\vect{v})^\top
& \|\vect{v}\|_2^2 +\sum_{j=1}^s\lambda_j r_j - t
\end{bmatrix}
\succeq 0. \Bigg\}
\end{align*}
Its Lagrangian dual problem becomes the following SDP:
\begin{align*}\label{dq2}\tag{PQ2}
\min_{S \in \mathbb{S}^{d}, \, \vect{u} \in \mathbb{R}^d} \quad
& \tr(S) - 2 \vect{v}^\top \vect{u} + \|\vect{v}\|_2^2 \\
\text{s.t.} \quad
& \begin{bmatrix}
1 & \vect{u}^\top \\
\vect{u} & S
\end{bmatrix} \succeq 0, \quad  \tr(A_j S) + \vect{b}_j^\top \vect{u} + r_j \le 0,
\quad j = 1, \ldots, s.
\end{align*}

\begin{theorem}[{\bf Computable formula for $P_{\mathcal{D}_{\textup{quad}}}(\vect{v})$}]\label{formula2}
Let $\vect{v}\in \mathbb{R}^d$ and $h(\vect{x})=\|\vect{x}-\vect{v}\|_2^2$.  Let 
$A_j\succeq0$, $ \vect{b}_j \in\R^d$, $r_j\in \R$, $j=1,\ldots,s$, and let $\mathcal{D}_{\textup{quad}}=\{\vect{x}\in \R^d \, : \, \vect{x}^\top A_j \vect{x} + \vect{b}_j^\top \vect{x} + r_j \le 0, \, j=1,\ldots, s\}$. If $(\vect{u}^{\vect{v}}, S^{\vect{v}})\in\R^d \times \mathbb{S}^d$ is a solution to the semidefinite program \eqref{dq2}, then
\[
P_{\mathcal{D}_{\textup{quad}}}(\vect{v}) = \vect{u}^{\vect{v}}.
\]
\end{theorem}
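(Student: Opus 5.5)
The plan is to show that \eqref{dq2} is an exact convex relaxation of \eqref{problem:qp}. The key fact is that, for any solution $(\vect{u}^{\vect{v}},S^{\vect{v}})$, the vector $\vect{u}^{\vect{v}}$ is feasible for \eqref{problem:qp} and attains a value no larger than $\min\eqref{problem:qp}$. Since $\mathcal{D}_{\textup{quad}}$ is closed and convex, \eqref{problem:qp} has the unique minimiser $P_{\mathcal{D}_{\textup{quad}}}(\vect{v})$, and uniqueness will then give the identity. Nonemptiness of $\mathcal{D}_{\textup{quad}}$ is implicit in the projection being well defined, and existence of a solution of \eqref{dq2} is assumed.

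\textbf{Step 1: \eqref{dq2} is a relaxation.} For any $\vect{x}\in\mathcal{D}_{\textup{quad}}$, set $\vect{u}=\vect{x}$ and $S=\vect{x}\vect{x}^\top$. Then
\[
\begin{bmatrix}1&\vect{x}^\top\\ \vect{x}&\vect{x}\vect{x}^\top\end{bmatrix}\succeq0 ,
\qquad
\tr(A_jS)+\vect{b}_j^\top\vect{u}+r_j=\vect{x}^\top A_j\vect{x}+\vect{b}_j^\top\vect{x}+r_j\le 0 ,
\]
and the objective equals $\|\vect{x}\|_2^2-2\vect{v}^\top\vect{x}+\|\vect{v}\|_2^2=\|\vect{v}-\vect{x}\|_2^2$. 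Hence $\min\eqref{dq2}\le\min\eqref{problem:qp}$.

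\textbf{Step 2: rounding a solution.} Let $(\vect{u}^{\vect{v}},S^{\vect{v}})$ solve \eqref{dq2}. By a Schur complement, the LMI is equivalent to $S^{\vect{v}}-\vect{u}^{\vect{v}}(\vect{u}^{\vect{v}})^\top\succeq0$. Since $A_j\succeq0$, this gives $\tr(A_jS^{\vect{v}})\ge (\vect{u}^{\vect{v}})^\top A_j\vect{u}^{\vect{v}}$, so
\[
(\vect{u}^{\vect{v}})^\top A_j\vect{u}^{\vect{v}}+\vect{b}_j^\top\vect{u}^{\vect{v}}+r_j\le \tr(A_jS^{\vect{v}})+\vect{b}_j^\top\vect{u}^{\vect{v}}+r_j\le0 ,
\]
that is, $\vect{u}^{\vect{v}}\in\mathcal{D}_{\textup{quad}}$. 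The same PSD inequality with $I_d\succeq0$ gives $\tr(S^{\vect{v}})\ge\|\vect{u}^{\vect{v}}\|_2^2$. It follows that
\[
\|\vect{v}-\vect{u}^{\vect{v}}\|_2^2\le \tr(S^{\vect{v}})-2\vect{v}^\top\vect{u}^{\vect{v}}+\|\vect{v}\|_2^2=\min\eqref{dq2}\le\min\eqref{problem:qp}.
\]

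\textbf{Step 3: conclusion.} Because $\vect{u}^{\vect{v}}$ is feasible for \eqref{problem:qp}, we also have $\|\vect{v}-\vect{u}^{\vect{v}}\|_2^2\ge\min\eqref{problem:qp}$, so equality holds throughout. Thus $\vect{u}^{\vect{v}}$ is a minimiser of the strictly convex objective $\|\vect{v}-\cdot\|_2^2$ over the closed convex set $\mathcal{D}_{\textup{quad}}$. By uniqueness of the minimiser, $\vect{u}^{\vect{v}}=P_{\mathcal{D}_{\textup{quad}}}(\vect{v})$.

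The only delicate point is Step 2, which relies on both the Schur complement and the positive semidefiniteness of $A_j$ and $I_d$. No Slater condition or strong duality between \eqref{dq1} and \eqref{dq2} is needed, since the argument works directly with the given solution of \eqref{dq2}.
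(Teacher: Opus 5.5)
Your proof is correct and follows essentially the same route as the paper. Both arguments use the same steps: the Schur complement gives $S^{\vect{v}}-\vect{u}^{\vect{v}}(\vect{u}^{\vect{v}})^\top\succeq 0$, which together with $A_j\succeq 0$ makes $\vect{u}^{\vect{v}}$ feasible; the lift $(\vect{x},\vect{x}\vect{x}^\top)$ shows \eqref{dq2} is a relaxation; and $\tr(S^{\vect{v}})\ge\|\vect{u}^{\vect{v}}\|_2^2$ yields $\|\vect{u}^{\vect{v}}-\vect{v}\|_2^2\le\|\vect{x}-\vect{v}\|_2^2$ for every $\vect{x}\in\mathcal{D}_{\textup{quad}}$, after which uniqueness of the projection finishes the proof.
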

\begin{proof}
Let $(\vect{u}^{\vect{v}}, S^{\vect{v}})\in\R^d \times \mathbb{S}^{d}$ is a solution to \eqref{dq2}. To see the feasibility of $\vect{u}^{\vect{v}}$ for \eqref{problem:qp}, note  from the LMI in \eqref{problem:qp}, that $S^{\vect{v}}-\vect{u}^{\vect{v}}(\vect{u}^{\vect{v}})^\top \succeq 0$. Since $A_j \succeq 0$, $j=1,\ldots,s$, it follows that $
(\vect{u}^{\vect{v}})^\top A_j \vect{u}^{\vect{v}}
= \tr\big(A_j\, \vect{u}^{\vect{v}} (\vect{u}^{\vect{v}})^\top\big)
\le \tr(A_j S^{\vect{v}})$, $j=1,\ldots,s$.
Using the linear constraints in \eqref{problem:qp}, we obtain $
(\vect{u}^{\vect{v}})^\top A_j \vect{u}^{\vect{v}} + \vect{b}_j^\top \vect{u}^{\vect{v}} + r_j
\le \tr(A_j S^{\vect{v}}) + \vect{b}_j^\top \vect{u}^{\vect{v}} + r_j \le  0$, $j=1,\ldots,s$, and so $\vect{u}^{\vect{v}} \in \mathcal{D}_{\textup{quad}}$.

To verify optimality of $\vect{u}^{\vect{v}}$ for \eqref{problem:qp}, 
let $\vect{x} \in \mathcal{D}_{\textup{quad}}$. Then, $(\vect{x}, \vect{x} \vect{x}^\top)$ is feasible in \eqref{problem:qp} since
\[
\begin{bmatrix} 1 & \vect{x}^\top \\\vect{x} & \vect{x}\vect{x}^\top \end{bmatrix} \succeq 0 \quad \text{and} \quad \tr(A_j\vect{x}\vect{x}^\top)+\vect{b}_j^\top \vect{x}+r_j
= \vect{x}^\top A_j \vect{x} + \vect{b}_j^\top \vect{x} + r_j \le 0.
\]
Since $(\vect{u}^{\vect{v}}, S^{\vect{v}})$ is a solution to \eqref{dq2}, it follows that 
    \[
        \|\vect{v}\|_2^2-2\vect{v}^\top\vect{y^v}+\tr(S^{\vect{v}})
        \le  \|\vect{v}\|_2^2-2\vect{v}^\top\vect{x}+\tr({\vect{x}\vect{x}^\top}) =
        \|\vect{v}-\vect{x}\|_2^2.
    \]Now, because $S^{\vect{v}}-\vect{u}^{\vect{v}}(\vect{u}^{\vect{v}})^\top \succeq 0$, we have $\tr(S^{\vect{v}}) \ge \|\vect{u}^{\vect{v}}\|_2^2$. Hence, 
\begin{equation*}
\|\vect{u}^{\vect{v}}-\vect{v}\|_2^2 = \|\vect{v}\|_2^2-2\vect{v}^\top\vect{u}^{\vect{v}}+ \|\vect{u}^{\vect{v}}\|_2^2\le   \|\vect{v}\|_2^2-2\vect{v}^\top\vect{u}^{\vect{v}}+\tr(S^{\vect{v}}) \le 
\|\vect{x}-\vect{v}\|_2^2,
\end{equation*}
for every $\vect{x} \in \mathcal{D}_{\textup{quad}}$. Therefore,  $\vect{u}^{\vect{v}}$ is a solution for \eqref{problem:qp} and  $P_{\mathcal{D}_{\textup{quad}}}(\vect{v}) = \vect{u}^{\vect{v}}$.
\end{proof}

\section{Primal-Dual Proximal Splitting with Interwoven SDP}\label{sec:PD}

In this section, we present the primal-dual proximal splitting (PDPS) algorithm for solving the reformulated convex composite unconstrained problems together with the proximal operator formulas used in the iterative process. Recall that \eqref{problem:ro-adjustable} admits the following unconstrained composite reformulation:
\[
\min_{\widetilde{\vect{x}}} 
F(\widetilde{\vect{x}}) + E(\widetilde{\vect{x}}) + H(\widetilde{\vect{x}}) + G(K\widetilde{\vect{x}}),
\]
where $\widetilde{X}$ and $\widetilde{Y}$ are finite-dimensional spaces. We first put the composite reformulation in the following lifted\footnote{The lifting introduced here is not unique, and alternative representations of the composite structure are possible.} canonical form, required for the PDPS method: 
\begin{equation}\label{eqn:lifted-form}
    \min_{\widetilde{\vect{x}} \in \widetilde{X}}
    \; F(\widetilde{\vect{x}}) + \breve{G}(\breve{K}(\widetilde{\vect{x}})),
\end{equation}
where  the lifted operator and functional are given by 
\[
    \breve{K}(\widetilde{\vect{x}}) := (K\widetilde{\vect{x}},\, \widetilde{\vect{x}},\, \widetilde{\vect{x}}),
    \qquad
    \breve{G}(\widetilde{\vect{y}},\, \widetilde{\vect{z}}_1,\, \widetilde{\vect{z}}_2)
    := G(\widetilde{\vect{y}}) + E(\widetilde{\vect{z}}_1) + H(\widetilde{\vect{z}}_2),
\]
and the augmented space $\breve{Y}$ is given by $\breve{Y} = \widetilde{Y} \times \widetilde{X} \times \widetilde{X}$. This lifting allows the inclusion of \emph{interwoven} SDP substeps within the proximal evaluations of $H^*$, where semidefinite constraints arise. The resulting algorithm is outlined in \Cref{alg:CP-RO-ARO}. 

The convergence properties of the PDPS algorithm are well established;  see, e.g., \cite[Theorem~1]{chambolle2011first} or \cite{banert2025chambolle}, for the weak and ergodic convergence results.

\begin{algorithm}[H]
    \caption{PDPS Algorithm for Robust Convex Optimisation Problems}
    \label{alg:CP-RO-ARO}
    \begin{description}
        \item[Step 1] (\emph{Initialisation}) Choose parameters $\tau, \sigma > 0$ such that $\tau \sigma \|\breve K\|_{\mathbb{L}(\widetilde{X};\breve{Y})}^2 < 4/3$ and $\theta \in [0,1]$.  
        Initialise the primal variable $\widetilde{\vect{x}}^0$, the dual variable $\breve{\vect{y}}^0 = (\widetilde{\vect{y}}^0, \widetilde{\vect{z}}_1^0, \widetilde{\vect{z}}_2^0)$, and the extrapolated point $\overline{\vect{x}}^0 = \widetilde{\vect{x}}^0$.
        
        \item[Step 2] (\emph{Dual update}) Update $\breve{\vect{y}} = (\widetilde{\vect{y}}, \widetilde{\vect{z}}_1, \widetilde{\vect{z}}_2)$:
        \begin{align*}
            \widetilde{\vect{y}}^{n+1} &= \prox_{\sigma G^*}\big(\widetilde{\vect{y}}^{n} + \sigma K \overline{\vect{x}}^{n}\big)\\
            \widetilde{\vect{z}}_1^{n+1} & = \prox_{\sigma E^*}(\widetilde{\vect{z}}_1^n + \sigma\overline{\vect{x}}^{n})\\
            \widetilde{\vect{z}}_2^{n+1} & = \prox_{\sigma H^*}(\widetilde{\vect{z}}_2^n + \sigma\overline{\vect{x}}^{n})  \quad (\textbf{via SDP}).
        \end{align*}
        
        \item[Step 3] (\emph{Primal update}) 
        Update $\widetilde{\vect{x}}$: $
            \widetilde{\vect{x}}^{n+1}  = \prox_{\tau F} (\widetilde{\vect{x}}^n - \tau (K^* \widetilde{\vect{y}}^{n+1} + \widetilde{\vect{z}}_1^{n+1} + \widetilde{\vect{z}}_2^{n+1}))$.
        
        \item[Step 4] (\emph{Extrapolation}) Set $
        \overline{\vect{x}}^{n+1} =  \widetilde{\vect{x}}^{n+1} + \theta(\widetilde{\vect{x}}^{n+1} -  \widetilde{\vect{x}}^{n})$.
        
        \item[Step 5] (\emph{Stopping criterion}) If $\max\left\{\frac{\|\widetilde{\vect{x}}^{n+1} - \widetilde{\vect{x}}^{n}\|_{\widetilde{X}}}{\| \widetilde{\vect{x}}^{n}\|_{\widetilde{X}}}, \, \frac{\|\breve{\vect{y}}^{n+1} -\breve{\vect{y}}^{n}\|_{\breve{Y}}}{\|\breve{\vect{y}}^{n}\|_{\breve{Y}}}\right\} \leq \varepsilon$, 
        terminate and return $(\widetilde{\vect{x}}^{n+1}, \breve{\vect{y}}^{n+1})$.
        Otherwise, increment $n \leftarrow n+1$ and go to Step 2.
    \end{description}
\end{algorithm}

\textbf{Adjustable RO Problems: Proximal-Operator Formulas for  PDPS Iterations}. Recall that for the unconstrained convex composite reformulation \eqref{eqn:comp_prob_adjustable_qdr} of the adjustable robust optimisation problem \eqref{problem:ro-adjustable} introduced in \Cref{sec:aro} under ball uncertainty set, the proper, convex, lower semicontinuous functionals are given by
\[
F(\widetilde{\vect{x}}):= f(\vect{x}) + \sum_{i=1}^m \iota_{\R_+}(\lambda_i), \ E(\widetilde{\vect{x}})
:= \iota_{\mathcal{C}}(\vect{x}), \ H(\widetilde{\vect{x}})
:= \iota_{\mathcal{D}}(\vect{x}), \  \text{ and } \ 
G(\widetilde{\vect{y}}):= \sum_{i=1}^m\iota_{B_i + \mathbb{S}_+^{k+1}}(\Psi_i)
\]
where $\widetilde{X}:= \R^d \times \R^q \times \R^{q\times k} \times \prod_{p=1}^q\mathbb{S}^{k} \times \R^{m}$, and $\widetilde{Y}:= \prod_{i=1}^m\mathbb{S}^{k+1}$, $\widetilde{\vect{x}}
= \big(\vect{x},\,\vect{y}_0,\,U, \vect{\Theta}, \boldsymbol{\lambda}\big) \in \widetilde{X}$ with $\vect{\Theta}=(\Theta_1,\ldots,\Theta_q)$, $\vect{\lambda}=(\lambda_1,\ldots,\lambda_m)$,  $\vect{\widetilde{y}} = (\Psi_1, \ldots, \Psi_m) \in \widetilde{Y}$, and
\[B_i = \begin{bmatrix} 0 & -\tfrac12\boldsymbol{b}_i\\ -\tfrac12\boldsymbol{b}_i^\top & -b_i^{(0)}\end{bmatrix}, \ \vect{b}_i = \begin{bmatrix} b_i^{(1)} & \cdots & b_i^{(k)} \end{bmatrix}^\top, \ i=1,\ldots, m.\]

Moreover, recall that $\mathcal{D} = \{\vect{x} \in \R^d \, : \, g_j(\vect{x}) \le 0, \ j =1,\ldots, s\}$ where $g_j$'s are SOS-convex.

\begin{proposition}[{\bf Closed-form formulas for proximal operators: Two-stage ARO}]\label{prop:SDP_prox_ARO} Let $\widetilde{\vect{x}}
= \big(\vect{x},\,\vect{y}_0,\,U, \vect{\Theta}, \boldsymbol{\lambda}\big) \in \widetilde{X}$ with $\vect{\Theta}=(\Theta_1,\ldots,\Theta_q)$, $\vect{\lambda}=(\lambda_1,\ldots,\lambda_m)$,  $\vect{\widetilde{y}} = (\Psi_1, \ldots, \Psi_m) \in \widetilde{Y}$ and $\tau ,\sigma>0$. Assume that $f$ and $\iota_{\mathcal{C}}$ are proximable. The expressions for the proximal operators of the functionals defined for the two-stage ARO in \eqref{eqn:comp_prob_adjustable_qdr} are as follows.
\begin{enumerate}[label=(\roman*)]
\item \label{prop:SDP_prox_F_ARO} $
    \prox_{\tau F}(\widetilde{\vect{x}}) = (\prox_{\tau f}(\vect{x}),\vect{y}_0,U,\vect{\Theta}, P_{\R_+^m}(\boldsymbol{\lambda}))$, where $P_{\R_+^m}(\boldsymbol{\lambda})
    = \big(\max\{\lambda_1,0\},\ldots,\max\{\lambda_m,0\}\big)$.
\item \label{prop:SDP_prox_E_ARO} 
    $\prox_{\sigma E^*}(\widetilde{\vect{x}}) = \big(\vect{x} - \sigma P_{\mathcal{C}}(\tfrac{\vect{x}}{\sigma}),\vect{0},0_{q \times k},\vect{0},\vect{0}\big)$.
\item \label{prop:SDP_prox_G_ARO} 
    $\prox_{\sigma G^*}(\widetilde{\vect{y}}) = \Big(\Psi_1 -\sigma B_1-\sigma P_{\mathbb{S}_+^{k+1}}(\frac{\Psi_1-B_1}{\sigma}),\dots,\Psi_m-\sigma B_m-\sigma P_{\mathbb{S}_+^{k+1}}(\frac{\Psi_m-B_m}{\sigma})\Big)$.
\item \label{prop:SDP_prox_H_ARO}
    $\prox_{\sigma H^*}(\widetilde{\vect{x}}) = \big(\vect{x}-\sigma P_{\mathcal{D}}(\frac{\vect{x}}{\sigma}),\vect{0},0_{q \times k},\vect{0},\vect{0}\big)$, where $P_{\mathcal{D}}(\vect{v}) = (\vect{y}^*_{\vect{v},\vect{e}_1^{[d]}}, \ldots, \vect{y}^*_{\vect{v},\vect{e}_d^{[d]}})$ for any $\vect{v} \in \R^d$, where $\vect{y}_{\vect{v}}^*\in \R^{s(d,\omega)}$ is a solution  to the associated semidefinite program \eqref{MP} with $\vect{y}_{\vect{v}}^*=\big(\vect{y}_{\vect{v},\alpha}^*\big)_{\alpha \in \mathcal{\bf N}_\omega^d}$, $\omega$ is the smallest even integer such that $\omega \ge \max_{j=1,\ldots,s}\deg g_j$, and $\vect{e}_i^{[d]}$, $i=1,\ldots,d$, are the multi-indices in $\mathcal{\bf N}_\omega^d$ whose $i$\textsuperscript{th} component is one, and equals zero otherwise. 
\end{enumerate}
\end{proposition}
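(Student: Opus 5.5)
The plan is to treat each item separately, using three standard facts. First, the proximal operator of a separable sum on a product space acts componentwise. Second, the Moreau decomposition $\prox_{\sigma \phi^*}(\vect{u}) = \vect{u} - \sigma \prox_{\phi/\sigma}(\vect{u}/\sigma)$ holds for proper, convex, lower semicontinuous $\phi$ (see \cite{BauCom2017}). Third, for the indicator of a nonempty closed convex set $S$, $\prox_{\iota_S/\sigma} = P_S$.

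For \ref{prop:SDP_prox_F_ARO}, $F$ is separable across the blocks $(\vect{x},\vect{y}_0,U,\vect{\Theta},\boldsymbol{\lambda})$: the block in $\vect{x}$ is $f$, the blocks in $\vect{y}_0,U,\vect{\Theta}$ are zero, and the block in $\boldsymbol{\lambda}$ is $\iota_{\R_+^m}$. The proximal problem therefore splits. The zero blocks return their arguments, the $\vect{x}$-block gives $\prox_{\tau f}(\vect{x})$, and the $\boldsymbol{\lambda}$-block gives $P_{\R_+^m}$, which acts coordinatewise as $\max\{\lambda_i,0\}$.

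For \ref{prop:SDP_prox_E_ARO} and \ref{prop:SDP_prox_H_ARO}, I apply Moreau's decomposition to $E=\iota_{\mathcal{C}\times\R^q\times\R^{q\times k}\times\prod\mathbb{S}^k\times\R^m}$, so that $\prox_{E/\sigma}$ is the projection onto this product set. That projection is $P_{\mathcal{C}}$ on the $\vect{x}$-component and the identity on the remaining components. Substituting gives $\widetilde{\vect{x}}-\sigma(P_{\mathcal{C}}(\vect{x}/\sigma),\vect{y}_0/\sigma,U/\sigma,\vect{\Theta}/\sigma,\boldsymbol{\lambda}/\sigma)$, so the unconstrained components cancel to zero. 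The same computation with $\mathcal{D}$ in place of $\mathcal{C}$ gives \ref{prop:SDP_prox_H_ARO}, provided $\mathcal{D}$ is nonempty, closed and convex. It is closed because the $g_j$ are continuous, and convex because SOS-convex polynomials are convex. The SDP expression for $P_{\mathcal{D}}(\vect{v})$ then follows directly from \Cref{formula}, under that theorem's standing assumption $\max\eqref{SDP0}=\min\eqref{MP}$. Proposition~\ref{B1} supplies this zero-duality-gap condition when the Slater condition holds, and I would state that assumption implicitly.

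For \ref{prop:SDP_prox_G_ARO}, $G$ is separable over $\Psi_1,\ldots,\Psi_m$, so $G^*$ and its proximal operator are separable too. Each block reduces to the indicator of the translated cone $B_i+\mathbb{S}_+^{k+1}$. Moreau gives $\Psi_i-\sigma P_{B_i+\mathbb{S}_+^{k+1}}(\Psi_i/\sigma)$, and $P_{B+S}(\vect{u})=B+P_S(\vect{u}-B)$. This yields $\Psi_i-\sigma B_i-\sigma P_{\mathbb{S}_+^{k+1}}(\Psi_i/\sigma-B_i)$.

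The stated formula instead has $P_{\mathbb{S}_+^{k+1}}((\Psi_i-B_i)/\sigma)$. This step is where I expect the main obstacle: reconciling the argument of the projection. Since $\mathbb{S}_+^{k+1}$ is a cone, $P_{\mathbb{S}_+}(\cdot/\sigma)=\sigma^{-1}P_{\mathbb{S}_+}(\cdot)$, but this scaling fixes only the overall factor and not the placement of $B_i$. I would therefore check carefully whether the intended convention rescales $B_i$, or whether the translation should read $\Psi_i/\sigma-B_i$. I would present the derived expression and note that the two agree exactly when $\sigma=1$ or $B_i=0$. Everything else is routine.
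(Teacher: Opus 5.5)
Your route is the paper's route: componentwise splitting of the proximal maps over the product structure, Moreau's decomposition, and the translation rule $P_{B_i+\mathbb{S}_+^{k+1}}(Z)=B_i+P_{\mathbb{S}_+^{k+1}}(Z-B_i)$. Items (i), (ii) and (iv) go through exactly as you describe.

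For item (iii) you do not need to hedge. Your expression is correct, and the stated formula fails as an identity in $\Psi_i$ whenever $\sigma\neq1$ and $B_i\neq0$. The paper's proof makes exactly the slip you suspected: it applies the translation rule at $Z=\Psi_i/\sigma$, but writes $(\Psi_i-B_i)/\sigma$ where it should write $\Psi_i/\sigma-B_i$.

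You can confirm this without Moreau. For $G_i:=\iota_{B_i+\mathbb{S}_+^{k+1}}$ one has $G_i^*=\langle B_i,\cdot\rangle+\iota_{-\mathbb{S}_+^{k+1}}$, and completing the square gives
\[
\prox_{\sigma G_i^*}(\Psi_i)=P_{-\mathbb{S}_+^{k+1}}(\Psi_i-\sigma B_i)=\Psi_i-\sigma B_i-P_{\mathbb{S}_+^{k+1}}(\Psi_i-\sigma B_i).
\]
This is your formula, since $\sigma P_{\mathbb{S}_+^{k+1}}(Z)=P_{\mathbb{S}_+^{k+1}}(\sigma Z)$. The stated formula instead equals $\Psi_i-\sigma B_i-P_{\mathbb{S}_+^{k+1}}(\Psi_i-B_i)$.

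A concrete counterexample: take $\vect{b}_i=\vect{0}$ and $b_i^{(0)}=1$, so $B_i=-\vect{e}\vect{e}^\top$ with $\vect{e}=\vect{e}_{k+1}^{[k+1]}$, together with $\Psi_i=0$ and $\sigma=2$. The true proximal point is $0$. The stated formula returns $\vect{e}\vect{e}^\top$, which lies outside $\mathrm{dom}\,G_i^*=-\mathbb{S}_+^{k+1}$ and so cannot be a proximal point of $\sigma G_i^*$. Item (iii) therefore has to be corrected; no convention reconciles it.

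Your caveat in (iv) is sound, though Proposition~\ref{B1} by itself only yields $\overline{\mu}_{\vect{v}}=\max\eqref{SDP0}$. In fact the caveat is unnecessary: once $\mathcal{D}\neq\emptyset$, the SDP formula needs no duality hypothesis and no Slater condition.
\begin{itemize}
\item With $\bar{\vect{x}}=P_{\mathcal{D}}(\vect{v})$, the vector $(\bar{\vect{x}}^{\vect{\alpha}})_{\vect{\alpha}}$ is feasible for \eqref{MP} with value $\overline{\mu}_{\vect{v}}$.
\item For every feasible $\vect{y}$, Proposition~\ref{prop:jensens} applied to the $g_j$ and to $h$ gives $\bar{\vect{x}}_{\vect{y}}:=(y_{\vect{e}_1^{[d]}},\ldots,y_{\vect{e}_d^{[d]}})\in\mathcal{D}$ and $L_{\vect{y}}(h)\ge h(\bar{\vect{x}}_{\vect{y}})\ge\overline{\mu}_{\vect{v}}$.
\end{itemize}
Hence $\min\eqref{MP}=\overline{\mu}_{\vect{v}}$ is attained. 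Every minimiser $\vect{y}^*_{\vect{v}}$ then returns $P_{\mathcal{D}}(\vect{v})$, by uniqueness of the projection.
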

\begin{proof}
\textit{(i)} Given the separability of the functional $F$ in the variables $\vect{x},\,\vect{y}_0,U,\vect{\Theta}, \boldsymbol{\lambda}$, we use 
\cite[Proposition~24.11]{BauCom2017} to deduce that its proximal map acts componentwise, yielding
$\prox_{\tau F}(\widetilde{\vect{x}}) = (\prox_{\tau f}(\vect{x}),\vect{y}_0,U,\vect{\Theta}, P_{\R^m_+}(\boldsymbol{\lambda})).$
Moreover, the projection onto the nonnegative orthant $\R^m_+$ follows from \cite[Lemma~6.26]{beck2017first}.

\textit{(ii)}  Similarly, by a direct application of \cite[Proposition~24.11]{BauCom2017} and by using the fact that $\prox_{\iota_{\mathcal{C}}} = P_{\mathcal{C}}$, the proximal operator for $E$ is given by $\prox_{E}(\widetilde{\vect{x}}) = (P_{\mathcal{C}}(\vect{x}),\vect{y}_0,U,\vect{\Theta},\boldsymbol{\lambda}\big)$. By Moreau's decomposition (see, e.g., \cite[Proposition~24.8]{BauCom2017}), we then have
\[
\prox_{\sigma E^*}(\widetilde{\vect{x}}) = 
\vect{\widetilde{x}} - \big(\sigma \prox_{\sigma^{-1}\iota_\mathcal{C}}(\tfrac{\vect{x}}{\sigma}),\vect{y}_0,U,\vect{\Theta},\boldsymbol{\lambda}\big)
=\big(\vect{x} - \sigma P_{\mathcal{C}}(\tfrac{\vect{x}}{\sigma}),\vect{0},0_{q \times k},\vect{0},\vect{0}\big).
\]

\textit{(iii)} Firstly, notice that $G(\widetilde{\vect{y}}):= \sum_{i=1}^m\iota_{B_i + \mathbb{S}_+^{k+1}}(\Psi_i)$ is separable in the components $\Psi_i$, $i=1,\ldots,m$. By \cite[Proposition~24.11]{BauCom2017}, we have $
\prox_{G}(\widetilde{\vect{y}})
 = \big( \prox_{\iota_{B_i{+}\mathbb{S}_+^{k+1}}}(\Psi_i) \big)_{i=1}^m$. Thus, by Moreau's decomposition, the proximal operator for $\sigma G^*$ is given by
 \begin{align*}
 \prox_{\sigma G^*}(\widetilde{\vect{y}}) 
 & = (\Psi_1, \ldots, \Psi_m) - \sigma \Big(\prox_{\sigma^{-1}\iota_{B_1 + \mathbb{S}_+^{k+1}}}(\tfrac{\Psi_1}{\sigma}), \ldots, \prox_{\sigma^{-1}\iota_{B_m + \mathbb{S}_+^{k+1}}}(\tfrac{\Psi_m}{\sigma})\Big)\\
 &=
 \Big(\Psi_1 -\sigma B_1-\sigma P_{\mathbb{S}_+^{k+1}}(\tfrac{\Psi_1-B_1}{\sigma}),\dots,\Psi_m- \sigma B_m-\sigma P_{\mathbb{S}_+^{k+1}}(\tfrac{\Psi_m-B_m}{\sigma})\Big),
 \end{align*}
 where the last equality uses \cite[Proposition~29.1]{BauCom2017} to write $\prox_{\iota_{B_i + \mathbb{S}_+^{k+1}}}(\Psi_i) = B_i + P_{\mathbb{S}_+^{k+1}}(\Psi_i - B_i).$

\textit{(iv)} By \cite[Proposition~24.11]{BauCom2017}, we have $\prox_{H}(\widetilde{\vect{x}}) = (P_{\mathcal{D}}(\vect{x}),\vect{y}_0,U,\vect{\Theta},\boldsymbol{\lambda}\big)$, where the closed-form formula for $P_{\mathcal{D}}(\vect{v})$ for any $\vect{v} \in \R^d$ follows from \Cref{formula}. The proximal operator for $\sigma H^*$ then follows from Moreau's decomposition.
\end{proof}

Additionally, recall that the linear mapping $K:\widetilde{X}\to\widetilde{Y}\) is defined by $K\widetilde{\vect{x}}
:=\big( \Psi_1(\widetilde{\vect{x}}),\ldots,\Psi_m(\widetilde{\vect{x}})\big)$, where, for each $i=1,\ldots,m$,
\[
\Psi_i(\widetilde{\vect{x}})
=
\begin{bmatrix}
\lambda_i I_k - (1-\rho)\displaystyle\sum_{p=1}^q c_i^{(p)} \Theta_p
&
-\lambda_i \vect{d} - \tfrac{1}{2}\!\left(\rho U^\top \vect{c}_i + A_i^\top \vect{x}\right)
\\[4pt]
\big(-\lambda_i \vect{d} - \tfrac{1}{2}(\rho U^\top \vect{c}_i + A_i^\top \vect{x})\big)^\top
&
\lambda_i( \|\vect{d}\|_2^2 - r) - (\vect{a}_i^{(0)})^\top \vect{x} -\rho \vect{c}_i^\top \vect{y}_0
\end{bmatrix}
\in \mathbb{S}^{k+1},
\]
with $A_i:= \begin{bmatrix} \vect{a}_i^{(1)} & \cdots &  \vect{a}_i^{(k)} \end{bmatrix}$, $i=1,\ldots,m$. To define the adjoint of the linear mapping $K$, let $\vect{\widetilde{y}} = (\Psi_1,\dots, \Psi_m) \in \widetilde{Y}$ where
\begin{align*}
\Psi_i=
\begin{bmatrix}
\Psi_{i,11} & \Psi_{i,12} \\ (\Psi_{i,12})^\top & \Psi_{i,22}
\end{bmatrix}
\in\mathbb{S}^{k+1},
\end{align*}
$\Psi_{i,11}\in\mathbb{S}^k$, $\Psi_{i,12}
\in\R^k$, $\Psi_{i,22}\in\R$. Then the adjoint $K^*:\widetilde{Y}\to\widetilde{X}$ of the linear mapping $K$ in \Cref{prop:ro-adjustable-sdp} is given as $
K^*\vect{\widetilde{y}} = (\vect{x},\vect{y_0},U,\Theta,\vect{\lambda})$ where $\vect{x}=-\sum_{i=1}^m \big(A_i\Psi_{i,12} + \vect{a}_i^{(0)}\Psi_{i,22}\big)$, $\vect{y_0}=-\rho \sum_{i=1}^m \vect{c}_i\Psi_{i,22}$, $U=-\rho  \sum_{i=1}^m \vect{c}_i\, (\Psi_{i,12})^\top $, 
$\Theta=(\Theta_1,\ldots,\Theta_q)$ with $\Theta_p= -(1-\rho)\sum_{i=1}^m c_i^{(p)}\Psi_{i,11}$ for all $p=1,\ldots,q$, and $\vect{\lambda}=(\lambda_1,\ldots,\lambda_m)$ with $\lambda_i= \tr(\Psi_{i,11}) -2\vect{d}^\top \Psi_{i,12}+(\|\vect{d}\|_2^2-r)\Psi_{i,22}$ for all $i=1,\ldots,m$.

\section{Lot-Sizing with SOS-Convex Costs under Demand Uncertainty}\label{sec:numerics}

In this section, we study a version of the lot-sizing model that accounts for demand uncertainty and convex storage costs, examined in  \cite{boyd2017multi,woolnough2021exact,zhen2018adjustable}.

The setting involves a network of $N$ stores, where inventory allocations must be planned in advance to meet \textit{uncertain} customer demand. Stock may be delivered to each store at the start of the day for storage, or later transported among stores to satisfy realised demand. This formulation aligns with the adjustable robust optimisation framework, since part of the decision (the initial stock allocation) must be made \emph{before} the uncertain demand is revealed, while subsequent recourse actions (the inter-store transfers) can be adjusted \emph{after} the demand realisation.

To formulate an optimisation model for the lot-sizing problem, we let $x_i$, $i=1,\ldots, N$, denote the initial quantity of stock delivered to store $i$, incurring a storage cost parameterised by coefficients $\nu_i$, $\varphi_i$, and $\xi_i$, $i=1,\ldots,N$. Each store can hold up to $\Gamma$ units of stock at any time. Let $y_{ij}$, $i,j=1,\ldots,N$, represent the quantity of stock transported from store $i$ to store $j$, with a transportation cost $t_{ij}$ per unit, where we assume $t_{ii} = 0$.
The demand at store $i$, denoted by $w_i$, is uncertain at the start of the day and is known only to belong to an uncertainty set $\mathcal{B}$.

\textbf{Two-stage ARO Model}. To capture this demand uncertainty, we formulate the problem as a two-stage ARO model. The first-stage decisions correspond to the initial stock deliveries $\vect{x}=(x_1,\ldots, x_N)$, made before demand is realised. After the actual demand vector $\vect{w} = (w_1,\ldots, w_N)$ becomes known, the transportation quantities $y_{ij}(\vect{w})$, for $i,j = 1,\dots,N$, are determined adaptively to satisfy the demands at all stores.

The objective is to minimise the total cost of initial deliveries, storage, and transportation under the worst-case demand realisation. For some storage cost parameters $\nu_i,\, \varphi_i,\, \xi_i \ge 0$, $i=1,\ldots,N$, and transportation costs $t_{ij}$, $i=1,\ldots,N$, $j=1,\ldots,N$, the resulting two-stage ARO formulation is given by:
{\small \begin{align*}
\min_{\substack{\vect{x} \in \mathbb{R}^N, 
\\ y_{ij}(\cdot), \; i,j=1,\dots,N}}
& \quad \sum_{i=1}^{N}  (\nu_i x_i^4 + \varphi_i x_i^2 + \xi_ix_i) + \max_{\vect{w} \in \mathcal{B}} \bigg\{\sum_{i=1}^{N}\sum_{j=1}^N t_{ij} \ y_{ij}(\vect{w}) \bigg\}\\
\text{s.t.} \quad 
& x_i + \sum_{j=1}^{N} y_{ji}(\vect{w}) - \sum_{j=1}^{N} y_{ij}(\vect{w}) \;\geq\; w_i, 
&& \forall \vect{w}\in \mathcal{B}, \ i=1,\dots,N, \\
& y_{ij}(\vect{w}) \ge 0, \   
&& \forall \vect{w}\in \mathcal{B}, \ i,j=1,\dots,N, \\
& 0 \le x_i \le \Gamma,  && \ i=1,\ldots,N,
\end{align*}}
which is further equivalent to:
{\small \begin{align}\label{eqn:lot-sizing-ARO-equiv}\tag{LS}
\min_{\substack{\vect{x} \in \mathbb{R}^N, \, \varrho \in \R, \zeta\in \R, \\ y_{ij}(\cdot), \; i,j=1,\dots,N}}
& \quad  \varrho + \zeta\\
\text{s.t.} \quad 
& x_i + \sum_{j=1}^{N} y_{ji}(\vect{w}) - \sum_{j=1}^{N} y_{ij}(\vect{w}) \ge w_i, 
&& \forall \vect{w}\in \mathcal{B}, \ i=1,\dots,N, \notag\\
& \sum_{i=1}^{N}\sum_{j=1}^N t_{ij} \ y_{ij}(\vect{w}) \le \zeta, 
&& \forall \vect{w}\in \mathcal{B}, \notag\\
& y_{ij}(\vect{w}) \ge 0, \  
&& \forall \vect{w}\in \mathcal{B}, \ i,j=1,\dots,N,\notag\\
& 0 \le x_i \le \Gamma,  && \ i=1,\ldots,N,\notag\\
& \sum_{i=1}^{N}  (\nu_i x_i^4 + \varphi_i x_i^2 + \xi_ix_i)  - \varrho \le 0,\notag
\end{align}}
where $x_i$, $i=1,\ldots,N$, are the ``here-and-now" decision variables, $y_{ij}$, $i,j=1,\ldots,N$, are the ``wait-and-see" variables, $\varrho$ is an auxiliary epigraph  (``here-and-now") variable, $\zeta$ is a dummy (``here-and-now") variable to provide an uncertainty-free objective function, with uncertainty set $\mathcal{B} = \{\vect{w}\in\R^{N} : \norm{\vect{w} - \vect{d}}_2^2 \le r\}$, for some radius $r >0$ and nominal demand vector $\vect{d} \in \R^N$.
Note that the quartic term appearing in the constraints of \eqref{eqn:lot-sizing-ARO-equiv} represents a higher-order polynomial storage cost. This setup enables us to examine the performance of our proposed method when the feasible set is defined by SOS-convex inequalities.

The lot-sizing problem in \eqref{eqn:lot-sizing-ARO-equiv} can be expressed as \eqref{problem:ro-adjustable}, examined in Section~\ref{sec:aro}, as follows: 
\begin{align*}
    \min_{\vect{x}, \vect{y}(\cdot)}
        & 
        \ f(\vect{x}) \\
        \text{s.t.} \\
        & (\vect{a}_i^{(0)})^\top \vect{x} + \sum_{\ell=1}^k w^{(\ell)}(\vect{a}_i^{(\ell)})^\top \vect{x} + \vect{c}_i^\top \vect{y}(\vect{w})  \le b_i^{(0)} + \sum_{\ell=1}^{k} w^{(\ell)} b_i^{(\ell)},
        && \forall \vect{w} \in \mathcal{B}, \ i = 1,\ldots,N^2{+}N{+}1, \notag
        \\
        &
        \vect{x}\in \mathcal{C}, \ g_1(\vect{x}) \le 0, \notag
\end{align*}
where the variables, coefficients, and the objective function are identified as follows: the vector  $\vect{x}$ is identified with $(\vect{x}, \varrho, \zeta ) \in \R^{N} \times \R \times \R$, $f(\vect{x}) := \varrho + \zeta$, $$\vect{y}(\vect{w}):=(y_{11}(\vect{w}), \ldots , y_{N1}(\vect{w}) , y_{12}(\vect{w}) , \ldots , y_{NN}(\vect{w})) \in\R^{N^2},$$ $\vect{c}_i \in \R^{N^2}$ for $i=1,\ldots,N^2{+}N{+}1$, with entries
$\vect{c}_i=\vect{1}_N\otimes\vect{e}_i^{[N]} - \vect{e}_i^{[N]}\otimes\vect{1}_N$ for $i=1,\dots,N$ where $\vect{1}_N \in \R^N$ is the vector whose components are all equal to one, $\otimes$ denotes the Kronecker product, $\vect{c}_{N+1}=(t_{11},\ldots,t_{N1},  t_{12}, \ldots,  t_{NN})$, $\vect{c}_i=-\vect{e}_{i-(N+1)}^{[N^2]}$ for $i=N{+}2,\dots,N^2{+}N{+}1$, $\vect{a}_i^{(0)}\in\R^{N+2}$ for $i=1,\dots,N^2{+}N{+}1$ with $\vect{a}_i^{(0)}=(-\vect{e}_i^{[N]},0,0)\in \mathbb{R}^{N} \times \R \times \R$ for $i=1,\dots,N$, $\vect{a}_{N+1}^{(0)}=(\vect{0},0,-1) \in \mathbb{R}^{N} \times \R \times \R$, $\vect{a}_{i}^{(0)}=\vect{0} \in \R^{N+2}$ for $i=N{+}2,\ldots, N^2{+}N{+}1$, $\vect{a}_i^{(\ell)}=\vect{0}\in\R^{N+2}$ for $i=1,\dots,N^2{+}N{+}1$, $\ell=1,\dots,k$, $b_i^{(0)} = 0$, $i=1,\ldots,N^2{+}N{+}1$, $b_i^{(\ell)} = -\delta_{i\ell}$ for $i=1,\ldots,N$, $\ell = 1,\ldots,k$, where $\delta_{i\ell}$ denotes the Kronecker delta\footnote{The Kronecker $\delta_{i\ell}$ is equal to $1$ when $i=\ell$ and $0$ otherwise.}, $b_{i}^{(\ell)} = 0$ for $i = N{+}1, \ldots, N^2{+}N{+}1$, $\ell=1,\ldots,k$,
$\mathcal{C}=\{(\vect{x},\varrho,\zeta) \in \R^{N+2}: 0 \le \vect{x} \le \Gamma \}$, and $g_1(\vect{x},\varrho,\zeta):=\sum_{i=1}^{N}  (\nu_i x_i^4 + \varphi_i x_i^2 + \xi_ix_i)  - \varrho$.

Furthermore, by applying quadratic decision rule (QDR) and \Cref{prop:ro-adjustable-sdp}, we can express this problem in the convex composite form 
\begin{equation*}
    \min_{\widetilde{\vect{x}}\in\widetilde{X}} F(\widetilde{\vect{x}}) + E(\widetilde{\vect{x}}) + H(\widetilde{\vect{x}})+G(K\widetilde{\vect{x}})
\end{equation*}
where $\widetilde{X}=\R^{N} \times \R \times \R \times\R^{N^2}\times\R^{N^2\times N}\times \prod_{p=1}^{N^2}\mathbb{S}^N\times\R^{N+1+N^2}$, $\widetilde{\vect{x}} = \big(\vect{x},\varrho,\zeta,\vect{y}_0,U,\vect{\Theta}, \boldsymbol{\lambda}\big)$, $F(\widetilde{\vect{x}}) = \zeta + \varrho + \sum_{i=1}^{N^2{+}N{+}1} \iota_{\R^+}(\lambda_i)$, $E(\widetilde{\vect{x}}) = \sum_{i=1}^N \iota_{[0,\Gamma]}(x_i)$, $H(\widetilde{\vect{x}}) = \iota_{\mathcal{D}}(\vect{x}, \varrho,\zeta)$ where $\mathcal{D} = \{(\vect{x},\varrho,\zeta) \in \R^{N+2} \, : \, g_1(\vect{x},\varrho,\zeta) \le 0\}$, $G(\widetilde{\vect{y}}) = \sum_{i=1}^{N^2{+}N{+}1}\iota_{B_i + \mathbb{S}_+^{k+1}}(\Psi_i)$, and
$K(\widetilde{\vect{x}}) = (\Psi_1(\widetilde{\vect{x}}),\ldots,\Psi_{N^2{+}N{+}1}(\widetilde{\vect{x}}))$ where $B_i$ and $\Psi_i$ are as defined in \eqref{eqn:K-SDP2}, for all $i=1,\ldots, N^2{+}N{+}1$.

\subsection{Numerical Experiments:  Experimental design and performance metrics} To validate the tractability and practical performance of the proposed adjustable lot-sizing formulation and solution approach, we conduct a series of numerical experiments. All instances are solved using the PDPS method outlined in \Cref{alg:CP-RO-ARO} applied to the unconstrained convex reformulation of the lot-sizing problem.

\textit{Algorithmic setup}. In each configuration, the PDPS algorithm is executed until the stopping tolerance $\varepsilon = 10^{-5}$ is reached for the criterion described in \Cref{alg:CP-RO-ARO}. The step sizes $\tau$ and $\sigma$ are chosen to satisfy the standard convergence condition $\sigma \tau \|\breve K\|^2 < \frac{4}{3}$ for PDPS algorithm \cite{chambolle2011first,banert2025chambolle}, where $\|\breve K\|$ is the operator norm of the linear map $\breve K$ in the  composite model with the lifted canonical form (\ref{eqn:lifted-form}). To obtain a computationally efficient estimate, we employ 12 iterations of the power method \cite{golub2013matrix}. Then, we fix $\sigma = 0.5$ and set $\tau = \tfrac{1.3}{\sigma\|\breve K\|_{\text{est}}^2}$, where $\|\breve K\|_{\text{est}}$ denotes the estimated operator norm.

\textit{Performance metrics}. We report a combination of the resulting \textit{(i)} worst-case objective values, \textit{(ii)} CPU time and iteration counts, and \textit{(iii)} \emph{price of robustness} (PoR), defined as the percentage deviation of the worst-case solution from the nominal benchmark in terms of the minimised costs. 

\textit{Nominal Model}. More precisely, the nominal benchmark is computed by solving the nominal lot-sizing model given by:
\begin{align*}\label{nominal}\tag{NLS}
\min_{\vect{x},\, y_{ij}}\; & \sum_{i=1}^N (\nu_ix_i^4 + \varphi_ix_i^2 + \xi_ix_i) + \sum_{i=1}^N \sum_{j=1}^N t_{ij}\, y_{ij} \\
\text{s.t.}\quad 
& x_i + \sum_{j=1}^N y_{ji} - \sum_{j=1}^N y_{ij} \ge d_i, && i=1,\dots,N,\\
& y_{ij} \ge 0, && i,j=1,\dots,N,\\
& 0 \le x_i \le \Gamma, && i=1,\dots,N,
\end{align*}

where $\vect{d} = (d_1,\ldots, d_N)$ is the nominal demand vector. The PoR (in percentage) is then computed as $100\big(\frac{\chi_r - \chi_n}{\chi_n} \big)\%$,
where $\chi_r$ denotes the robust (worst-case) objective value, and $\chi_n$ is the nominal optimal value. A smaller PoR indicates a smaller degradation in cost due to robustness, and hence, reflects a more efficient trade-off between robust and nominal performance. In other words, the PoR quantifies the relative increase in objective (cost) required to immunise the solution against uncertainty. 

\textit{Experimental design.} Each simulation corresponds to a fixed network size $N$ and storage cost structure. Unless otherwise stated, the uncertainty set radius is set to $r=1$, the capacity bound is $\Gamma=1000$, the nominal demand vector is $\vect{d} = \vect{1}_N$, and $\rho=0.5$ in the quadratic decision rule. For experiments involving repeated runs, the cost parameters $\nu_i$, $\varphi_i$, $\xi_i$, $t_{ij}$, $i,j=1,\ldots,N$, were randomly generated from a uniform distribution and we reported averaged results over $100$ independent runs. For experiments where the network size $N$ is varied, the problem is solved only once, the cost parameters are instead determined and fixed according to a predefined rule that scales with $N$.

\textit{Computational environment and parallel implementation}. All simulations were executed using \textsc{MATLAB} R2024b on a mid-2024 MacBook Air with an M3 chip (8-core CPU, 10-core GPU) and 16GB RAM. For experiments involving repeated independent runs, the PDPS solver was implemented using \texttt{parfor} loops to allow for parallel computing.

\subsection{Experiment I: Numerical stability and price of robustness under linear costs}
 
In the first experiment, we investigate how the size of the ball uncertainty set affects the performance of the proposed PDPS method for the lot-sizing problem with (randomised) linear storage costs. More specifically, we consider the storage cost $\sum_{i=1}^N \xi_ix_i$ where $\xi_i$, $i=1,\ldots,N$, are randomised, along with the transaction costs parameters $t_{ij}$, $i,j=1,\ldots,N$. It is worth noting that under this linear cost structure, the dual update step of \Cref{alg:CP-RO-ARO} admits a closed-form formula that does not involve solving an SDP. The uncertainty set radius $r$ is varied over the values $r=0.0, 0.1, \ldots, 1.0$.

\Cref{fig:lot_sizing_summary} demonstrates that the proposed PDPS method solves the lot-sizing problem with linear storage costs across all uncertainty levels tested. As seen in \Cref{fig:lot_norm}, the algorithm reaches the prescribed stopping tolerance within a reasonable number of iterations, confirming tractability and numerical stability. \Cref{fig:lot_relchange} shows that the PoR increases monotonically with $r$.

\begin{figure}[H]
    \centering
    \begin{subfigure}[t]{0.4\textwidth}
        \centering
        \includegraphics[width=\linewidth]{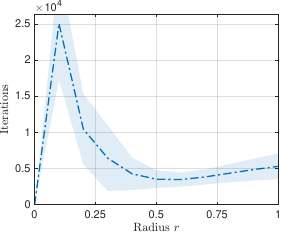}
        \caption{Number of PDPS iterations required.}
        \label{fig:lot_norm}
    \end{subfigure}
    \begin{subfigure}[t]{0.4\textwidth}
        \centering
        \includegraphics[width=\linewidth]{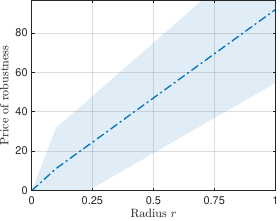}
        \caption{Price of robustness.}
        \label{fig:lot_relchange}
    \end{subfigure}
   \caption{Simulation results for the lot-sizing problem with linear transaction costs. Each panel shows the mean value across 100 simulations with one standard deviation as shaded bands, plotted as a function of the ball uncertainty radius.} 
    \label{fig:lot_sizing_summary}
\end{figure}

\subsection{Experiment II: Scalability of PDPS under linear costs} In this experiment, we investigate the scalability of the proposed PDPS method when solving the lot-sizing problem with linear storage costs. To assess how computational performance and solution quality evolve with problem dimension, we vary the network size, $N$, from 2 to 20, while fixing the uncertainty set size at $r=1.0$, the linear storage cost parameters (with $\xi_i = 1.0$, $i = 1, \ldots, N$), and the transaction cost parameters (with $t_{ij} = 2$ if $i \ne j$, and $t_{ij} = 0$ otherwise, for $i,j=1,\ldots,N$). 

\begin{table}[H]
    \centering
    \resizebox{0.8\textwidth}{!}{%
    \begin{tabular}{@{}ccccccc@{}}
        \toprule
        $N$ & $\#$ of primal-dual variables & Obj Val & PoR \% &  CPUT (s) & Iterations & Time/Iteration (s)\\
        \midrule
        2  & 147 & $3.9927$ & $99.6346$ & $0.4797$ & $2,828$ & $1.70\times 10^{-4}$ \\
        4  & 1,116 & $7.1230$ & $78.0744$ & $3.8116$ & $5,297$ & $7.19\times 10^{-4}$\\
        6  & 4,381 & $9.8561$ & $64.2682$ & $30.6909$ & $11,374$ & $2.70\times 10^{-3}$\\
        8  & 12,174 & $12.3544$ & $54.4302$ & $146.8857$ & $19,739$ & $7.44\times 10^{-3}$ \\
        10 & 27,495 & $14.6174$ & $46.1744$ & $520.8207$ & $29,302$ & $1.77\times 10^{-2}$\\
        15 & 125,350 & $18.0563$ & $20.3753$ & $2,845.0710$ &  $28,371$ & $1.02\times 10^{-1}$\\
        20 & 375,780 & $23.0031$ & $15.0156$ & $19,800.2729$ & $54,555$ & $3.63\times 10^{-1}$\\ 
        \bottomrule
    \end{tabular}}
    \caption{Summary of results for the experiments with linear transaction cost.}
    \label{tab:linear_cost}
\end{table}

Table~\ref{tab:linear_cost} summarises the numerical results under linear storage costs with varying network size $N$. The reported objectives, PoR, CPU times, and iteration counts illustrate that the proposed PDPS method remains numerically tractable as the problem dimension increases, with computational effort growing moderately with $N$.

It is worth noting that when the storage and transportation costs are linear, the problem can be solved directly by formulating the robust lot-sizing problem as a single SDP, as demonstrated in \cite[Section~4]{woolnough2021exact}. However, this direct SDP approach becomes computationally prohibitive as the problem dimension increases, since it can only handle instances up to $N=8$ due to the growing dimension of the SDP. In contrast, the proposed PDPS frameworks offer a scalable alternative by decomposing the problem and enforcing the semidefinite constraints iteratively through simple projection steps. This splitting-based approach significantly reduces computational and memory requirements, making it possible to solve larger problem instances.

\subsection{Experiment III: Evaluation of PDPS-SDP approach under SOS-convex costs}
In this experiment, we investigate the numerical tractability of the proposed PDPS-SDP method when solving the lot-sizing problem with higher degree SOS-convex polynomial storage costs. More specifically, we consider the storage cost $\sum_{i=1}^N \nu_ix_i^4$ where $\nu_i = 1.0$, $i=1,\ldots,N$. Note that, under this quartic cost structure, the dual update step of \Cref{alg:CP-RO-ARO} involves solving an SDP, interwoven with the PDPS.  We also fixed the ball uncertainty set size at $r=1.0$, and the transaction cost parameters with $t_{ij} = 2$ if $i \ne j$, and $t_{ij} = 0$ otherwise, for $i,j=1,\ldots,N$.

\textbf{Numerical stability}. \Cref{fig:evolution} illustrates the convergence behaviour of the proposed PDPS method with interwoven SDP calculations for the lot-sizing problem involving quartic storage costs. The figure shows the evolution of the stopping criterion (in logarithmic scale) for $N=6$ and $N=8$. 

 \begin{figure}[H]
    \centering
    \begin{subfigure}[t]{0.4\textwidth}
        \centering
        \includegraphics[width=\linewidth]{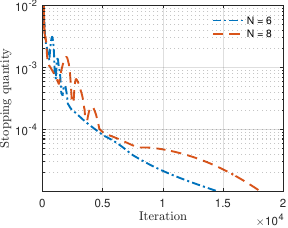}
    \end{subfigure}
    \caption{Evolution of the stopping quantity in \Cref{alg:CP-RO-ARO} for the lot-sizing problem solved via PDPS with interwoven SDP calculations for handling quartic storage costs, shown for $N=6$ and $N=8$.}
    \label{fig:evolution}
\end{figure}

\begin{table}[H]
\centering
\resizebox{0.85\textwidth}{!}{%
\begin{tabular}{cccccccc}
\hline
\multicolumn{1}{c}{\multirow{2}{*}{$N$}} & $\#$ of primal-dual & \multicolumn{2}{c}{Interwoven SDP} & \multirow{2}{*}{Obj Val} & \multirow{2}{*}{CPUT (s)} & \multirow{2}{*}{Iterations} & \multirow{2}{*}{Time/Iteration (s)} \\ \cline{3-4}
\multicolumn{1}{c}{} & variables & \multicolumn{1}{r}{Variables} & \multicolumn{1}{r}{Constraints} &  &  &  &  \\ \hline
2 & 147 & 21 & 14 & $16.9220$ & $495.2853$ & $9,808$ & $5.05\times 10^{-2}$\\
4 & 1,116 & 120 & 69 & $19.8737$ & $675.9231$ & $12,729$ & $5.31\times 10^{-2}$ \\
6 & 4,381 & 406 & 209 & $21.9356$ & $870.2896$ & $14,549$ & $5.98\times 10^{-2}$ \\
8 & 12,174 & 1,035 & 494 & $24.2206$ & $1,485.5357$ & $18,100$ & $8.20\times 10^{-2}$ \\ 
10 & 27,495 & 2,211 & 1,000 & $28.6495$ & $4,555.8002$ & $28,808$ & $1.58\times 10^{-1}$ \\ \bottomrule
\end{tabular}}
\caption{PDPS with interwoven SDP (quartic cost) for $r=1$.}
\label{tab:quartic_cost}
\end{table}

\textbf{Overall computational performance}.  To assess how computational performance evolves with problem dimension for the lot-sizing problem with nonlinear storage cost, we vary the network size, $N$, from 2 to 10, while fixing the uncertainty set size at $r=1.0$ under the quartic storage costs. 

 \Cref{tab:quartic_cost} highlights the computational behaviour of the proposed PDPS framework with interwoven SDP calculations under SOS-convex (quartic) storage costs. The results confirm that the PDPS approach with interwoven SDP-based calculations effectively handles nonlinear-SOS convex costs, maintaining tractability for moderate-scale robust lot-sizing problems.  
 
 However, for any fixed $N$, the quartic cost model requires substantially more iterations than the linear cost formulation. 
 Moreover, comparing the Time/Iteration columns of \Cref{tab:linear_cost,tab:quartic_cost}, each iteration for the quartic cost model is about one to two orders of magnitude more expensive (in terms of CPU time) than the linear cost model, revealing the additional computing burden introduced by the SDP-based proximal calculations.

\subsection{Strengths and limitations of the proposed computational framework}
A key strength of the proposed PDPS scheme, interwoven with SDP, lies in its ability to extend the computational tractability to a broad class of robust convex optimisation problems involving nonlinear SOS-convex polynomial objectives or constraints. This extension significantly broadens the scope of both the PDPS scheme and robust convex optimisation. By decomposing the original problem and enforcing computations through simple iterative projection steps, the framework offers a moderately scalable alternative to solving the entire robust formulation as a single large-scale SDP. The decomposition reduces computational and memory requirements, thereby enabling the solution of larger problem instances while preserving convergence guarantees and ensuring numerical stability.

Despite these advantages, a notable limitation of this approach is its continued reliance on SDPs within the proximal step when handling nonlinear SOS-convex objective or constraint functions. However, this reliance is mitigated by the fact that the SDP computation is confined to the proximal step, rather than distributed across the entire optimisation process. As a result, the heavy SDP workload is isolated within a well-structured subroutine, allowing the overall PDPS framework to remain computationally tractable and scalable. Nevertheless, for large-scale problems involving high-degree or high-dimensional SOS-convex polynomials, the size of these SDP subproblems may still pose scalability and computational efficiency challenges.

\section{Conclusion}\label{sec:conclusion}

In this work, we introduced a primal-dual splitting framework that integrates semidefinite programming techniques and tools from semi-algebraic geometry to address computational tractability of a broad class of robust convex optimisation problems with SOS-convex polynomial constraints. By reformulating robust constraints as LMIs, we transformed the original problems into unconstrained convex composite formulations sharing the same optimal values. Furthermore, we established an efficiently computable SDP-based formula for projections onto closed convex sets defined by SOS-convex inequalities, enabling the integration of SDPs within the first-order proximal splitting algorithms for solving robust optimisation problems. The proposed framework was validated through numerical experiments on lot-sizing problems with linear as well as SOS-convex storage and inventory costs under demand uncertainty, demonstrating its applicability and computational tractability.

\begin{appendices}

\section{SOS-Convex polynomials: Technical Details and Proofs}  \label{sec:sos}

We provide some basic details for the relationship between sum-of-squares polynomials and linear matrix inequalities \cite{lasserre2009moments}.

A monomial over $\vect{x} \in \R^d$ of degree $\omega$ is $\vect{x}^{\vect{\alpha}} = x_1^{\alpha_1} x_2^{\alpha_2} \ldots x_d^{\alpha_d}$ with $\omega = \sum_{i=1}^d \alpha_i$ and multi-index $\vect{\alpha} = (\alpha_1, \ldots, \alpha_d)$. The canonical basis is \begin{align*}
    \vect{y}(\vect{x}) := (1, x_1, \ldots, x_d, x_1^2, x_1x_2, \ldots, x_d^2, \ldots, x_1^{\omega}, \ldots, x_d^{\omega})^\top,
\end{align*}which is of dimension $s(d,\omega) := \binom{d+\omega}{\omega}$. 

Consider a polynomial $f$ on $\R^d$ with degree at most $2\omega$. It can be expressed as $f(\vect{x}) = \sum_{\vect{\alpha} \in \mathcal{\bf N}_{2\omega}} f_{\vect{\alpha}} x^{\vect{\alpha}}$ where $f_{\vect{\alpha}}$ is the $\vect{\alpha}$\textsuperscript{th} coefficient of $f$ and $\mathcal{\bf N}_{2\omega}^d = \{(\alpha_1, \ldots, \alpha_d) : \alpha_i \in \mathcal{N}_0, \, \sum_{i=1}^d \alpha_i \le 2\omega\}$ is a multi-index set, where $\mathcal{N}_0$ is the set of nonnegative integers. By \cite[Proposition 2.1]{lasserre2009moments}, $f$ is SOS if and only if there exists $Q \in \mathbb{S}^{s(d,\omega)}_+$ such that $f(\vect{x}) = \vect{y}(\vect{x})^\top Q \vect{y}(\vect{x})$.

Thus, if we write
\begin{align}\label{lmi}
    f(\vect{x}) = \vect{y}(\vect{x})^\top Q \vect{y}(\vect{x}) = \tr\big(Q \vect{y}(\vect{x}) \vect{y}(\vect{x})^\top\big) = \sum_{\vect{\alpha} \in \mathcal{\bf N}_{2\omega}^d} \tr(Q B_{\vect{\alpha}}) \vect{x}^{\vect{\alpha}}, \ \text{ for all } \ \vect{x} \in \R^d,
\end{align}
for some appropriate matrices $B_{\vect{\alpha}} \in \mathbb{S}^{s(d,\omega)}$, then by comparing the coefficients of each side of equation \eqref{lmi}, we obtain that $f$ is SOS if and only if there exists $Q \in \mathbb{S}^{s(d,\omega)}_+$ such that $f_{\vect{\alpha}} = \tr(QB_{\vect{\alpha}})$ for all $\vect{\alpha} \in \mathcal{\bf N}_{2\omega}^d$. Hence, checking whether $f$ is SOS amounts to finding $Q \in \mathbb{S}^{s(d,\omega)}_+$ such that $\tr(QB_{\vect{\alpha}})=f_{\vect{\alpha}}$, for $\vect{\alpha}\in\mathcal{\bf N}_{2\omega}^d$.

\begin{proposition}[\bf{\Cref{B1}}]\label{AB1}
    Let  $h(\vect{x})=\|\vect{v}-\vect{x}\|_2^2$ and $\mathcal{D} = \{\vect{x} \in \R^d : g_j(\vect{x}) \le 0, \, j=1,\ldots,s\}$, where $g_j$, $j=1,\ldots,s$, are SOS-convex polynomials. Let $\omega$ be an even integer such that $\omega \ge  \max_{j=1,\ldots,s} \deg g_j$. Assume that the Slater constraint qualification holds for \eqref{problem:prox-deltaD}.  Then,
    \begin{equation}\label{DMP}
        \inf{\eqref{problem:prox-deltaD}} = \max_{\vect{\lambda}\in \R_+^s, \gamma\in \R, \sigma\in \Sigma_{\omega}^2} \bigg\{ \gamma : \; 
        \|\vect{v}- \cdot \|_2^2 + \sum_{j=1}^s \lambda_j g_j - \gamma =\sigma  \bigg\}.
    \end{equation}
\end{proposition}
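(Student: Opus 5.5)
We prove the two inequalities $\max\eqref{DMP}\le\inf\eqref{problem:prox-deltaD}$ and $\inf\eqref{problem:prox-deltaD}\le\max\eqref{DMP}$, the latter with attainment.

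\textbf{Weak duality.} Let $(\vect{\lambda},\gamma,\sigma)$ be feasible for the right-hand side of \eqref{DMP}, and let $\vect{x}\in\mathcal{D}$. Since $\sigma\ge 0$, $\lambda_j\ge 0$ and $g_j(\vect{x})\le 0$, we get
\[
\|\vect{v}-\vect{x}\|_2^2 \;\ge\; \|\vect{v}-\vect{x}\|_2^2+\sum_{j=1}^s\lambda_j g_j(\vect{x}) \;=\; \gamma+\sigma(\vect{x}) \;\ge\; \gamma .
\]
Hence the supremum of the right-hand side is at most $\overline{\mu}_{\vect{v}}$.

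\textbf{Attainment of the primal value.} The objective $h$ is strongly convex and coercive, and $\mathcal{D}$ is closed, convex and nonempty (by Slater). So \eqref{problem:prox-deltaD} has a unique minimiser $\vect{x}^*$, and $\overline{\mu}_{\vect{v}}=h(\vect{x}^*)$ is finite.

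\textbf{Lagrange multipliers.} The constraints $g_j$ are convex polynomials, hence differentiable convex functions, and the Slater condition holds. Standard convex Lagrangian duality (e.g.\ via the KKT theorem) then gives multipliers $\overline{\vect{\lambda}}\in\R^s_+$ such that
\[
\overline{\mu}_{\vect{v}}=\min_{\vect{x}\in\R^d}\Big\{h(\vect{x})+\sum_{j=1}^s\overline{\lambda}_j g_j(\vect{x})\Big\}.
\]

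\textbf{Certificate construction.} Define the polynomial
\[
\sigma:=h+\sum_{j=1}^s\overline{\lambda}_j g_j-\overline{\mu}_{\vect{v}}.
\]
By the previous step, $\sigma$ is nonnegative on $\R^d$. It is also SOS-convex, for two reasons. First, the Hessian of $h$ is $2I_d=(\sqrt2 I_d)^\top(\sqrt2 I_d)$. Second, a nonnegative combination of SOS matrix polynomials is again SOS: stack the factors $\sqrt{\overline{\lambda}_j}\,P_j(\vect{x})$. Subtracting a constant does not change the Hessian.

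By \Cref{prop:nn_sos_conv}, $\sigma$ is therefore an SOS polynomial. Its degree is at most $\max\{2,\deg g_j\}\le\omega$; here we use that $\omega$ is even and $\omega\ge\deg g_j$, and note $\omega\ge 2$ whenever some $g_j$ is nonlinear. If all $g_j$ are affine, we take $\omega\ge 2$ implicitly, since $h$ is quadratic. So $\sigma\in\Sigma^2_\omega$.

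\textbf{Conclusion.} The triple $(\overline{\vect{\lambda}},\overline{\mu}_{\vect{v}},\sigma)$ is feasible for the right-hand side of \eqref{DMP} with objective value $\overline{\mu}_{\vect{v}}$. Combined with weak duality, this shows the supremum equals $\overline{\mu}_{\vect{v}}$ and is attained, so it is a max.

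\textbf{Main obstacle.} The key step is the passage from nonnegativity to an SOS certificate. This is exactly where SOS-convexity is needed, through \Cref{prop:nn_sos_conv} and the closure of SOS-convexity under nonnegative combinations. A related degree issue is that the SOS decomposition should not need degree higher than $\deg\sigma$. This holds because any SOS representation of a polynomial of degree $2t$ uses squares of degree at most $t$ (leading forms cannot cancel), so membership in $\Sigma^2_\omega$ follows.
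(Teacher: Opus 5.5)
Your proof is correct and follows the paper's argument. Slater-based Lagrangian duality gives multipliers $\overline{\vect{\lambda}}$, the polynomial $h+\sum_j\overline{\lambda}_j g_j-\overline{\mu}_{\vect{v}}$ is nonnegative and SOS-convex and hence SOS by \Cref{prop:nn_sos_conv}, and weak duality follows from the sign argument on $\mathcal{D}$. Your extra bookkeeping (closure of SOS-convexity under nonnegative combinations, and the bound $\deg\sigma\le\omega$, which can fail only in the trivial case where every $g_j$ is constant and $\omega=0$) fills in details the paper leaves implicit.
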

\begin{proof} 
Let $\overline{\mu}_{\vect{v}}: = \inf \eqref{problem:prox-deltaD}$. Then $\overline{\mu}$ is finite as $h$ is a coercive polynomial and the problem \eqref{problem:prox-deltaD} has a minimiser. By the Lagrangian duality theorem  under the Slater constraint qualification
\cite[Corollary 4.1]{jeyakumar2008constraint}, we have
{\small\begin{align*}
\overline{\mu}_{\vect{v}} 
&
= \inf \eqref{problem:prox-deltaD} 
= \inf_{\vect{x} \in \R^d} \Big\{  \|\vect{v} - \vect{x}\|_2^2 \, :  \,  g_j(\vect{x}) \le 0, \, j=1,\ldots,s \Big\}
=
\max_{\lambda_j \geq 0} \inf_{\vect{x} \in \mathbb{R}^d} \Big \{  \|\vect{v} - \vect{x}\|_2^2 + \sum_{j=1}^s \lambda_j g_j(\vect{x}) \Big\}.
\end{align*}}So, there exist $\lambda_j^\star \ge 0$, $j=1,\dots,s$, such that $
\|\vect{v} - \vect{x}\|_2^2 + \sum_{j=1}^s \lambda_j^\star g_j(\vect{x}) - \overline{\mu}_{\vect{v}} \ge 0$, for all $\vect{x} \in \R^d$. Since $\lambda_j^\star \geq 0$, $j=1,\ldots,s$, and $f$, $g_j$, $j=1,\ldots,s$, are SOS-convex polynomials, 
it follows that $\|\vect{v} - \cdot \|_2^2 + \sum_{j=1}^s \lambda_j^\star g_j - \overline{\mu}$
is a nonnegative SOS-convex polynomial. Consequently, by \Cref{prop:nn_sos_conv},
$\|\vect{v} - \cdot \|_2^2 + \sum_{j=1}^s \lambda_i^\star g_j- \overline{\mu}_{\vect{v}} \in \Sigma_{\omega}^2$. 
Thus, $(\vect{\lambda}^\star, \overline{\mu}_{\vect{v}})$ with $\vect{\lambda} = (\lambda_1^\star,\ldots, \lambda_s^\star)$, is a feasible point for \eqref{DMP}, and so $\overline{\mu}_{\vect{v}} \le \max \eqref{DMP}$. 

The weak duality holds by construction.  To see this, let $\vect{x}$ be feasible for \eqref{problem:prox-deltaD}. For any feasible point $(\vect{\lambda}, \gamma)$ of \eqref{DMP}, we have $\|\vect{v} - \cdot\|_2^2 + \sum_{j=1}^s \lambda_j g_j- \gamma \in \Sigma_{\omega}^2$. In other words, $
\|\vect{v} - \vect{x}\|_2^2- \gamma \ge  -\sum_{j=1}^s \lambda_j g_j(\vect{x})  \ge 0$,
where the last inequality follows from the feasibility of $\vect{x}$ for \eqref{problem:prox-deltaD}. Thus, $\|\vect{v} - \vect{x}\|_2^2 \ge \gamma$. Therefore, $\inf\eqref{problem:prox-deltaD} = \overline{\mu}_{\vect{v}} \ge \max \eqref{DMP}$, and so the conclusion holds.
\end{proof}

We make use of the following generalised Jensen's inequality \cite[Theorem~2.6]{lasserre2009convexity} to prove \Cref{formula}. To do this, given $\vect{y} =(\vect{y}_{\vect{\alpha}})_{\vect{\alpha} \in \mathcal{\bf N}_\omega^d}$, recall the Riesz mapping  $L_{\vect{y}} : \R[\vect{x}] \to \R$ is given by $L_{\vect{y}}(f) = \sum_{\vect{\alpha} \in \mathcal{\bf N}_{\omega}^d} f_{\vect{\alpha}} y_{\vect{\alpha}}$ for any $f \in  \R[\vect{x}]$ with degree at most $\omega$.

\begin{proposition}[{\bf Jensen's inequality for SOS-convex polynomials \cite[Theorem~2.6]{lasserre2009convexity}}]\label{prop:jensens}
    Let $f$ be an SOS-convex polynomial on $\R^d$ of an even degree $\omega$, $\vect{y} = (y_{\vect{\alpha}})_{\alpha \in \mathcal{\bf N}_{\omega}^d}$ satisfy $y_{\vect{0}} = 1$ and $\sum_{\alpha \in \mathcal{\bf N}_{\omega}^d} y_{\vect{\alpha}} B_{\vect{\alpha}} \succeq 0$, $B_{\alpha} \in \mathbb{S}^{s(d,\omega/2)}$, and $L_{\vect{y}} : \R[\vect{x}] \to \R$ be the linear functional $L_{\vect{y}}(h) = \sum_{\alpha \in \mathcal{\bf N}_\omega^d}f_{\vect{\alpha}} y_{\vect{\alpha}}$. Then, $L_{\vect{y}} (f) \ge f(L_{\vect{y}} (\mathbf{x}_1), \ldots, L_{\vect{y}} (\mathbf{x}_d))$, where $\mathbf{x}_i$ denotes the polynomial that maps a vector $\vect{x} \in \R^d$ to the $i$\textsuperscript{th} coordinate $x_i$, $i=1,\ldots,d$.
\end{proposition}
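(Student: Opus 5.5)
The plan is the classical argument of Lasserre: linearise $f$ at the candidate point and apply the Riesz functional $L_{\vect{y}}$ to the resulting nonnegative gap polynomial. The first step is to set $\bar{\vect{u}} := (L_{\vect{y}}(\mathbf{x}_1),\ldots,L_{\vect{y}}(\mathbf{x}_d)) \in \R^d$. Then consider the polynomial in $\vect{x}$
\[
\phi(\vect{x}) := f(\vect{x}) - f(\bar{\vect{u}}) - \nabla f(\bar{\vect{u}})^\top(\vect{x}-\bar{\vect{u}}).
\]
This polynomial has degree at most $\omega$. The whole proof reduces to the claim $L_{\vect{y}}(\phi) \ge 0$. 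Indeed, $L_{\vect{y}}$ is linear and $L_{\vect{y}}(1) = y_{\vect{0}} = 1$, so
\[
L_{\vect{y}}(\phi) = L_{\vect{y}}(f) - f(\bar{\vect{u}}) - \nabla f(\bar{\vect{u}})^\top\big(L_{\vect{y}}(\mathbf{x}) - \bar{\vect{u}}\big) = L_{\vect{y}}(f) - f(\bar{\vect{u}}).
\]
This is exactly the desired inequality.

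To prove $L_{\vect{y}}(\phi)\ge 0$, I would use the positivity of $L_{\vect{y}}$ on squares. For any polynomial $p$ of degree at most $\omega/2$, write $p(\vect{x}) = \vect{p}^\top \vect{y}(\vect{x})$ with $\vect{y}(\vect{x})$ the canonical basis of degree $\omega/2$. By the coefficient identity \eqref{lmi},
\[
L_{\vect{y}}(p^2) = \vect{p}^\top\Big(\sum_{\vect{\alpha}} y_{\vect{\alpha}} B_{\vect{\alpha}}\Big)\vect{p} \ge 0,
\]
since the moment matrix is positive semidefinite by hypothesis. So it suffices to express $\phi$ as a sum (or an integral) of squares of polynomials of degree at most $\omega/2$.

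For that, I would avoid relying on the equivalence of SOS-convexity with the gap being SOS, and instead use Taylor's formula with integral remainder:
\[
\phi(\vect{x}) = \int_0^1 (1-t)\,(\vect{x}-\bar{\vect{u}})^\top H\big(\bar{\vect{u}} + t(\vect{x}-\bar{\vect{u}})\big)(\vect{x}-\bar{\vect{u}})\,dt.
\]
Since $H = P^\top P$, the integrand equals $\sum_{l}\big(P_l(\bar{\vect{u}}+t(\vect{x}-\bar{\vect{u}}))(\vect{x}-\bar{\vect{u}})\big)^2$ for each fixed $t$, where $P_l$ denotes the rows of $P$. Each such row expression is a polynomial in $\vect{x}$ whose coefficients are polynomial in $t$.

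The main obstacle is the degree bookkeeping. Since $\deg H \le \omega-2$ and $H = P^\top P$, the leading forms cannot cancel, so $\deg P_l \le \omega/2 - 1$. Hence each squared polynomial has degree at most $\omega/2$, and $L_{\vect{y}}$ is well defined on it. I would justify the non-cancellation from the identity $v^\top H v = \sum_l (P_l v)^2$: the top-degree parts form a sum of squares that cannot vanish identically. Given this bound, $L_{\vect{y}}$ commutes with the $t$-integral, because the coefficients are polynomial in $t$ and $L_{\vect{y}}$ acts on finitely many coefficients. So $L_{\vect{y}}(\phi)$ is an integral of the nonnegative quantities $(1-t)\sum_l L_{\vect{y}}\big(\big(P_l(\cdot)(\vect{x}-\bar{\vect{u}})\big)^2\big)$, which gives $L_{\vect{y}}(\phi)\ge 0$ and completes the argument.
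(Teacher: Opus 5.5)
Your proof is correct. It is essentially the argument of \cite[Theorem~2.6]{lasserre2009convexity}, which the paper cites without reproducing: linearise $f$ at $L_{\vect{y}}(\mathbf{x})$, use $L_{\vect{y}}(1)=1$ to kill the linear term, write the gap through the integral Taylor remainder with the SOS Hessian, and conclude from $L_{\vect{y}}(p^2)\ge 0$ for $\deg p\le \omega/2$. The only variation is that the cited proof first shows the gap polynomial is itself SOS (an integrated SOS matrix is again SOS), whereas you interchange $L_{\vect{y}}$ with the $t$-integral, which avoids that auxiliary step; your bound $\deg P_l\le \omega/2-1$ is correctly justified by the non-cancellation of top-degree squares.
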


\begin{theorem}[{\bf\Cref{formula}}]\label{formula1}
For a fixed $\vect{v}\in \mathbb{R}^d$, let $h(\vect{x})=\|\vect{x}-\vect{v}\|_2^2$.  Let $g_j, j=1,\ldots, s$, be SOS-convex polynomials and let $\mathcal{D}=\{\vect{x}\in \R^d \,: \,g_j(\vect{x}) \le 0,\,  j=1,\ldots, s\}$. Assume that 
$\overline{\mu}_{\vect{v}} = \max\eqref{SDP0} = \min\eqref{MP}$. Suppose that $\vect{y}_{\vect{v}}^* \in \R^{s(d,\omega)}$ is a solution  to the semidefinite program \eqref{MP} with $\vect{y}_{\vect{v}}^*=\big(y_{\vect{v},\vect{\alpha}}^*\big)_{\vect{\alpha} \in \mathcal{\bf N}_{\omega}^d}$.
Then, $
P_{\mathcal{D}}(\vect{v})= (y^*_{\vect{v},\vect{e}_1^{[d]}}, \ldots, y^*_{\vect{v},\vect{e}_d^{[d]}})$,
where $\vect{e}_i^{[d]}$, $i=1,\ldots,d$, are the multi-index in $\mathcal{\bf N}_\omega^d$ whose $i$\textsuperscript{th} component is one, and zero otherwise.  
\end{theorem}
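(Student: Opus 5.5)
Set $\vect{u} := (y^*_{\vect{v},\vect{e}_1^{[d]}}, \ldots, y^*_{\vect{v},\vect{e}_d^{[d]}}) = (L_{\vect{y}^*_{\vect{v}}}(\mathbf{x}_1),\ldots,L_{\vect{y}^*_{\vect{v}}}(\mathbf{x}_d))$. The plan is the same as in the proof of \Cref{formula2}, with the lifted matrix $(\vect{u},S)$ replaced by the moment vector $\vect{y}^*_{\vect{v}}$ and the trace inequality replaced by Jensen's inequality for SOS-convex polynomials (\Cref{prop:jensens}). I will show two things: that $\vect{u}$ is feasible for \eqref{problem:prox-deltaD}, and that $h(\vect{u}) \le \overline{\mu}_{\vect{v}}$. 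Since $h$ is strictly convex and $\mathcal{D}$ is closed and convex, the minimiser of \eqref{problem:prox-deltaD} is unique and equals $P_{\mathcal{D}}(\vect{v})$, so these two facts give the conclusion.

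\emph{Feasibility.} The point $\vect{y}^*_{\vect{v}}$ is feasible for \eqref{MP}, so $y^*_{\vect{v},\vect{0}}=1$ and $\sum_{\vect{\alpha}} y^*_{\vect{v},\vect{\alpha}}B_{\vect{\alpha}}\succeq 0$, and also $L_{\vect{y}^*_{\vect{v}}}(g_j)=\sum_{\vect{\alpha}}(g_j)_{\vect{\alpha}}y^*_{\vect{v},\vect{\alpha}}\le 0$. Each $g_j$ is SOS-convex, so \Cref{prop:jensens} gives $g_j(\vect{u}) \le L_{\vect{y}^*_{\vect{v}}}(g_j)\le 0$, i.e.\ $\vect{u}\in\mathcal{D}$. \Cref{prop:jensens} is stated for a polynomial of degree exactly $\omega$, whereas $\deg g_j$ may be smaller than $\omega$. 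Its proof only uses that $f$ has degree at most $\omega$, so that $L_{\vect{y}}$ is defined on $f$ and the SOS certificate of $f(\vect{x})-f(\vect{z})-\nabla f(\vect{z})^\top(\vect{x}-\vect{z})$ in $\vect{x}$ has degree at most $\omega$. I will note this, or reduce to the exact-degree case by perturbing with $\epsilon\|\vect{x}\|^{\omega}$ and letting $\epsilon\to0$.

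\emph{Optimality.} The polynomial $h(\vect{x})=\|\vect{x}-\vect{v}\|_2^2$ is convex quadratic, hence SOS-convex, and $\deg h = 2\le\omega$. Applying \Cref{prop:jensens} to $h$ gives $h(\vect{u})\le L_{\vect{y}^*_{\vect{v}}}(h)=\min\eqref{MP}$. By hypothesis $\min\eqref{MP}=\max\eqref{SDP0}=\overline{\mu}_{\vect{v}}$, where the last value is $\inf\eqref{problem:prox-deltaD}$. Hence $h(\vect{u})\le\overline{\mu}_{\vect{v}}\le h(\vect{u})$, the second inequality holding because $\vect{u}\in\mathcal{D}$. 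So $\vect{u}$ attains the infimum of \eqref{problem:prox-deltaD}, and by uniqueness $\vect{u}=P_{\mathcal{D}}(\vect{v})$.

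The only delicate step is the degree bookkeeping in applying Jensen's inequality. The Riesz functional must be well defined on $g_j$ and $h$, and the moment matrix must have the right size $s(d,\omega/2)$, so that the SOS certificates in the proof of \Cref{prop:jensens} fit inside the truncation $\mathcal{\bf N}_\omega^d$. Everything else follows directly from the assumed equality $\max\eqref{SDP0}=\min\eqref{MP}$.
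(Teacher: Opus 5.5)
Your proposal is correct and follows the paper's proof essentially step for step. Jensen's inequality (\Cref{prop:jensens}) applied to each $g_j$ gives feasibility of $\vect{u}$. Jensen applied to the SOS-convex quadratic $h$, together with the assumed equality $\min\eqref{MP}=\overline{\mu}_{\vect{v}}$, gives $h(\vect{u})\le\overline{\mu}_{\vect{v}}$, and uniqueness of the minimiser of \eqref{problem:prox-deltaD} identifies $\vect{u}$ with $P_{\mathcal{D}}(\vect{v})$. Your remark on $\deg g_j<\omega$ addresses a point the paper passes over silently. It is harmless, as your first fix observes, since Jensen's inequality for SOS-convex polynomials only requires degree at most $\omega$.
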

\begin{proof} Let $\vect{u}_i$ denote the polynomial that maps a vector $\vect{u} \in \R^d$ to its $i$\textsuperscript{th} coordinate $u_i$, $i=1,\ldots,d$, and $\overline{\vect{x}}^* =(L_{\vect{y}_{\vect{v}}^*}(\vect{u}_1), \ldots, L_{\vect{y}_{{\vect{v}}}^*}(\vect{u}_d))= (y^*_{\vect{v},\vect{e}_1^{[d]}}, \ldots, y^*_{\vect{v},\vect{e}_d^{[d]}}) \in \mathbb{R}^d$. We will show that $\overline{\vect{x}}^*$ is the solution to \eqref{problem:prox-deltaD}.

We note that the function $h$ is coercive and strongly convex, and so, problem \eqref{problem:prox-deltaD} has a unique minimiser and $\overline{\mu}_{\vect{v}}$ is finite. Let $\vect{y}_{\vect{v}}^* =\big(y_{\vect{v},\vect{\alpha}}^*\big)_{\vect{\alpha} \in \mathcal{\bf N}_\omega^d} \in \R^{s(d,\omega)}$ be a solution  to the semidefinite program \eqref{MP}. Then, 
\[
L_{\vect{y}_{{\vect{v}}}^*}(g_j) = \sum_{\vect{\alpha} \in \mathcal{\bf N}_\omega^d} (g_j)_{\vect{\alpha}} \, y_{\vect{v},\vect{\alpha}}^* \leq 0, \ j=1,\ldots,s, \ \text{ and } \ \sum_{\vect{\alpha} \in \mathcal{\bf N}_\omega^d} y_{\vect{v},\vect{\alpha}}^* B_{\vect{\alpha}} \succeq 0 \ \text{ with }\ y_{\vect{v},\vect{0}}^*=1. 
\]
Moreover, by the assumption, one has $\overline{\mu}_{\vect{v}}=\max\eqref{SDP0}=\min\eqref{MP}=L_{\vect{y}_{{\vect{v}}}^*}(h)$.
Now, applying Jensen's inequality for SOS-convex polynomials (see Proposition \ref{prop:jensens}) to $g_j$ together with the vector $\vect{y}_{\vect{v}}^*\in \R^{s(d,\omega)}$, one obtains that, for all $j=1,\ldots,s$, 
\begin{eqnarray*}
L_{\vect{y}_{{\vect{v}}}^*}(g_j)  \geq  g_j (L_{\vect{y}_{\vect{v}}^*}(\vect{ u}_1),\ldots,L_{\vect{y}_{{\vect{v}}}^*}(\vect{u}_d)) 
 =  g_j(y^*_{\vect{v},\vect{e}_1^{[d]}}, \ldots, y^*_{\vect{v},\vect{e}_d^{[d]}}) = g_j(\overline{\vect{x}}^*),
\end{eqnarray*}
where the first equality follows from the definitions of the Riesz mapping and the mapping $\vect{u}_i$,  $i=1,\ldots,d$, and the second equality is by the definition of $\overline{\vect{x}}^*$. This implies that, for all $j=1,\ldots, s$, 
         $0\geq L_{\vect{y}{^*}}(g_j)\geq g_j(\overline{\vect{x}}^*)$, 
and so, $\overline{\vect{x}}^*$ is feasible for the problem \eqref{problem:prox-deltaD}. Similarly, noting that $h$ is also an SOS-convex polynomial, one also has $
\overline{\mu}_{\vect{v}}= L_{\vect{y}_{{\vect{v}}}^*}(h) \ge h(\overline{\vect{x}}^*).$ This then implies that $
\overline{\vect{x}}^*=(L_{\vect{y}_{\vect{v}}^*}(\vect{u}_1),\ldots,L_{\vect{y}_{\vect{v}}^*}(\vect{u}_d))=(y^*_{\vect{v},\vect{e}_1^{[d]}}, \ldots, y^*_{\vect{v},\vect{e}_d^{[d]}})$ is a minimiser of \eqref{problem:prox-deltaD}.
\end{proof}

\end{appendices}
\vspace{-0.49cm}
\singlespacing
\end{document}